\documentclass{amsart}
\usepackage{amsmath}
\usepackage{amssymb}
\usepackage{amscd}
\usepackage{color}
\newtheorem{theorem}{Theorem}%[section]
\newtheorem{lemma}[theorem]{Lemma}
\newtheorem{corollary}[theorem]{Corollary}

\theoremstyle{remark}
\newtheorem{remark}[theorem]{Remark}

\newcommand{\bN}{ \mathbb{N} }

\newcommand{\cL}{ \mathcal{L} }
\newcommand{\cU}{ \mathcal{U} }
\renewcommand{\sc}{ \mathsf{c} }    % \sc is small caps typeface by default
\newcommand{\sL}{ \mathsf{L} }
\newcommand{\val}{ \mathsf{v} }
\newcommand{\f}{\frac}
\newcommand{\card}[1]{\lvert#1\rvert}

\DeclareMathOperator{\nr}{nr}
\DeclareMathOperator{\GCD}{GCD}

\begin{document}

\title{Factorization in the self-idealization of a PID}
%\title[Commutative ring of $2\times 2$ matrices]
%{Factorization in a commutative subring of the ring of $2\times 2$ matrices over a PID}
\author[G.W. Chang]
{Gyu Whan Chang}
\address{(Chang) Department of Mathematics, University of Incheon,
Incheon 406-772, Korea.} \email{whan@incheon.ac.kr}

\author[D. Smertnig]
{Daniel Smertnig}
\address{(Smertnig) Institut f\"ur Mathematik und Wissenschaftliches Rechnen \\
Karl-Franzens-Uni\-ver\-si\-t\"at Graz \\
Heinrichstra\ss e 36\\
8010 Graz, Austria} \email{daniel.smertnig@uni-graz.at}

%\date{\today}
\date{24.5.2013}

\subjclass[2010]{13A05, 13F15, 20M13, 20M14}
\keywords{non-unique factorizations, matrix ring, noetherian ring, sets of lengths}

\begin{abstract}
Let $D$ be a principal ideal domain and
$R(D) = \{\left(\begin{array}{cc}
 a & b \\
 0 & a \\
 \end{array}
 \right) \mid a, b \in D\}$ be its self-idealization.
It is known that $R(D)$ is a commutative noetherian ring with identity, and hence $R(D)$ is atomic (i.e., every nonzero nonunit can be written as a finite product of irreducible elements).
In this paper, we completely characterize
the irreducible elements of $R(D)$. We then use this result to show how to
factorize each nonzero nonunit of $R(D)$ into
irreducible elements. We show that every irreducible
element of $R(D)$ is a primary element, and we determine the system of sets of lengths of $R(D)$.
\end{abstract}

\maketitle

\section{Introduction}

Let $R$ be a commutative noetherian ring. Then $R$ is atomic, which means that every nonzero nonunit element of $R$ can be written as a finite product of atoms (irreducible elements) of $R$. The study of non-unique factorizations has found a lot of attention. Indeed this area has developed into a flourishing branch of Commutative Algebra (see some surveys and books \cite{a97, c05, ghk06, bw13}). However, the focus so far was almost entirely on commutative integral domains, and only first steps were done to study factorization properties in rings with zero-divisors (see \cite{av96, ff11}). In the present note we study factorizations in a subring of a matrix ring over a principal ideal domain, which will turn out to be a commutative noetherian ring with zero-divisors.

To begin with, we fix our notation and terminology. Let $R$ be a commutative ring with identity and $U(R)$ be the set of units of $R$. Two elements $a,b \in R$ are said to be {\em associates} if  $aR = bR$.
Clearly, if $a = ub$ for some $u \in U(R)$, then $a$ and $b$ are associates.
An $a \in R$ is said to be {\em irreducible} if $a=bc$ implies that either $b$ or $c$
is associated with $a$. We say that $R$ is {\em atomic} if every nonzero nonunit
of $R$ is a finite product of irreducible elements. It is clear that
noetherian rings are atomic (cf. \cite[Theorem 3.2]{av96})
and that $0 \in R$ is irreducible if and only if $R$ is an integral domain.
A ring $R$ is a half-factorial
ring (HFR) (resp., bounded factorization ring (BFR)) if $R$ is atomic and two factorizations of a nonzero
nonunit into irreducible elements have the same length (resp., for each nonzero
nonunit $x \in R$, there is an integer $N(x) \geq 1$ so that for any factorization
$x = x_1 \cdots x_n$, where each $x_i$ is a nonunit, we have $n \leq N(x)$).
$R$ is a FFR (finite factorization ring) if $R$ is atomic and each nonzero nonunit has only finitely many factorizations into irreducibles, up to order and associates.
A nonzero nonunit $x \in R$ is said to be prime (resp., primary) if
$xR$ is a prime (resp., primary) ideal. Hence a prime element is primary but
not vice versa (for example, if $\mathbb{Z}$ is the ring of integers, then
$4 \in \mathbb{Z}$ is primary but not prime).
We say that $R$ is a {\em unique factorization ring} (UFR) if every nonzero principal ideal of $R$
can be written as a product of principal prime ideals (cf. \cite[Theorem 4.9]{av96}).
Clearly, a prime element is irreducible, and so a UFR is atomic.

For $x \in R$ a nonzero nonunit, its \emph{set of lengths} is defined as
\[
  \sL(x) = \{\, k \in \bN \mid \text{there exist irreducibles $u_1,\ldots,u_k \in R$ with $x = u_1 \cdot\ldots\cdot u_k$} \,\}.
\]
Clearly, $x$ is irreducible if and only if $\sL(x) = \{\, 1 \,\}$. If $x \in U(R)$, we set $\sL(x) = \{\, 0 \,\}$. The \emph{system of sets of lengths} is defined as $\cL(R) = \big\{\, \sL(x) \mid x \in R\setminus \{\,0\,\} \,\big\}$. Sets of lengths and invariants derived from them are some of the classical invariants considered in non-unique factorization theory (see \cite[Ch. 1.4]{ghk06}).
The reader is referred to \cite{ghk06} for undefined definitions and notations.

Let $M$ be an $R$-module. The {\em idealization} $R(+)M$ of $M$
is a ring, which is defined as an abelian group $R\oplus M$, whose ring multiplication is given by
$(a,b) \cdot (x,y) = (ax, ay+bx)$ for all $a, x \in R$ and $b, y \in M$.
It is known that $R(+)M$ is a noetherian ring if and only if $R$ is noetherian and $M$ is
finitely generated \cite[Theorem 4.8]{aw09}.
Let $D$ be an integral domain, Mat$_{2\times 2}(D)$ be the ring of $2\times 2$ matrices over $D$, and
$R(D) = \{\left(\begin{array}{cc}
 a & b \\
 0 & a \\
 \end{array}
 \right) \mid a, b \in D\}$. It is easy to show that $R(D)$ is a commutative ring with identity
under the usual matrix addition and multiplication; in particular,
$R(D)$ is a subring of Mat$_{2\times 2}(D)$.
Clearly, the map $a \mapsto \left(\begin{array}{cc}
 a & 0 \\
 0 & a \\
 \end{array}
 \right)$ embeds $D$ into $R(D)$, and the map $\varphi: D(+)D \rightarrow R(D)$, given by
$\varphi(a,b) = \left(\begin{array}{cc}
 a & b \\
 0 & a \\
 \end{array}
 \right),$ is a ring isomorphism.
In view of this isomorphism, $R(D)$ is called the self-idealization of $D$ (cf. \cite{s09}). There is also an isomorphism $D[X]/\langle X^2 \rangle \to R(D)$ mapping $a + bX + \langle X^2 \rangle$ to $\begin{pmatrix} a & b \\ 0 & a \end{pmatrix}$.
Some factorization properties of $R(+)M$ have been studied in \cite[Theorem 5.2]{av96}.
For more on basic properties of $R(+)M$
(and hence of $R(D)$), see \cite{aw09} or \cite[Section 25]{h88}.

Let $D$ be a principal ideal domain (PID).
Then $R(D)$ is noetherian, and thus $R(D)$ is atomic.
In Section 2, we first characterize the irreducible elements of $R(D)$, and we then
use this result to show how to factorize each nonzero nonunit of $R(D)$ into
irreducible elements via the factorization of $D$. We show that
$\left(\begin{array}{cc}
 0 & 1 \\
 0 & 0 \\
 \end{array}
 \right)$ is the unique prime element (up to associates) of $R(D)$.
We prove that every nonzero nonunit of $R(D)$ can be written as a product of primary elements.
Finally, in Section 3, we completely describe the system of sets of lengths $\cL(R(D))$.

We write $\bN = \{\, 1,2,3, \ldots \,\}$ for the set of positive integers, and $\bN_0 = \bN \cup \{\, 0 \,\}$ for the set of non-negative integers.

\section{Factorization in $R(D)$ when $D$ is a PID}

Let $D$ be an integral domain,
and $$R(D) = \{\left(\begin{array}{cc}
 a & b \\
 0 & a \\
 \end{array}
 \right) \mid a, b \in D\}$$ be the self-idealization of $D$.
Clearly, $\left(\begin{array}{cc}
 1 & 0 \\
 0 & 1 \\
 \end{array}
 \right)$ is the identity of $R(D)$.

 If $\alpha= \begin{pmatrix} a & b \\ 0 & a \end{pmatrix} \in R(D)$, then $\nr(\alpha) = a$ is the \emph{norm}, and this is a ring homomorphism $R(D) \to D$. Observe that $\alpha$ is a zero-divisor if and only if $a=0$.
We write $R(D)^\bullet$ for the monoid of non-zero-divisors of $R(D)$.

We begin this paper by characterizing the units of $R(D)$, which is very useful in the proof of Theorem \ref{irreducible}.

\begin{lemma} \label{basic properties of r(d,d')}
{\em (cf. \cite[Theorem 25.1(6)]{h88})}
If $D$ is an integral domain, then
$\alpha = \left(\begin{array}{cc}
 a & b \\
 0 & a \\
 \end{array} \right) \in R(D)$ is a unit of $R(D)$ if and only if $a$ is a unit of $D$.
 \end{lemma}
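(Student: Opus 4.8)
The plan is to prove the two implications separately; both are short computations with $2\times 2$ matrices. Throughout I would use that $R(D)$ is commutative, so that to certify a unit it suffices to exhibit a one-sided inverse.

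For the ``only if'' direction, I would invoke the norm homomorphism $\nr\colon R(D)\to D$, $\nr(\alpha)=a$, introduced just above the statement. Since $\nr$ is a ring homomorphism and ring homomorphisms carry units to units, $\alpha\in U(R(D))$ forces $a=\nr(\alpha)\in U(D)$. Equivalently, if $\alpha\beta=\begin{pmatrix}1&0\\0&1\end{pmatrix}$ with $\beta=\begin{pmatrix}c&d\\0&c\end{pmatrix}\in R(D)$, then comparing the $(1,1)$-entries of $\alpha\beta=\begin{pmatrix}ac & ad+bc\\0 & ac\end{pmatrix}$ gives $ac=1$, whence $a\in U(D)$.

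For the ``if'' direction, suppose $a\in U(D)$ with inverse $a^{-1}\in D$. I would simply write down a candidate inverse inside $R(D)$, namely
\[
  \beta=\begin{pmatrix} a^{-1} & -ba^{-2}\\ 0 & a^{-1}\end{pmatrix},
\]
which lies in $R(D)$ because $a^{-1}\in D$ forces $a^{-2}$ and hence $-ba^{-2}$ into $D$. A direct multiplication then gives $\alpha\beta=\begin{pmatrix}1 & 0\\ 0 & 1\end{pmatrix}$, since the off-diagonal entry is $a(-ba^{-2})+ba^{-1}=-ba^{-1}+ba^{-1}=0$. Thus $\alpha\in U(R(D))$.

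There is no genuine obstacle here. The only point requiring the slightest care is guessing the correct off-diagonal entry $-ba^{-2}$ of $\beta$, which is in any case forced by solving $ad+bc=0$ under the constraint $c=a^{-1}$; everything else is immediate from the shape of the elements of $R(D)$ and the fact that their diagonal entries multiply exactly as in $D$.
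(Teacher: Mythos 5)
Your proof is correct and is essentially identical to the paper's: the forward direction extracts $ac=1$ from the $(1,1)$-entry (the norm-homomorphism remark is just a repackaging of this), and the candidate inverse $\begin{pmatrix} a^{-1} & -ba^{-2} \\ 0 & a^{-1} \end{pmatrix}$ is exactly the paper's $\begin{pmatrix} u & -bu^{2} \\ 0 & u \end{pmatrix}$ with $u=a^{-1}$. Nothing further is needed.
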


\begin{proof}
If $\alpha$ is a unit, then $\alpha \cdot \left(\begin{array}{cc}
 x & y \\
 0 & x \\
 \end{array} \right) = \left(\begin{array}{cc}
 1 & 0 \\
 0 & 1 \\
 \end{array} \right)$ for some $\left(\begin{array}{cc}
 x & y \\
 0 & x \\
 \end{array} \right) \in R(D)$. Thus $ax = 1$, and so $a \in U(D)$. Conversely,
 assume that $a$ is a unit, and let $u=a^{-1}$. Then
$\left(\begin{array}{cc}
 a & b \\
 0 & a \\
 \end{array} \right) \left(\begin{array}{cc}
 u & -bu^2 \\
 0 & u \\
 \end{array} \right) = \left(\begin{array}{cc}
 1 & 0 \\
 0 & 1 \\
 \end{array} \right)$ and $\left(\begin{array}{cc}
 u & -bu^2 \\
 0 & u \\
 \end{array} \right) \in R(D)$. Thus $\alpha$ is a unit.
\end{proof}

For an arbitrary commutative ring $R$, there can be two elements $a, b \in R$ such that $a$ and $b$ are associates but $a \neq ub$ for all $u \in U(R)$ (see, for example, \cite[Example 2.3]{av96}). This cannot happen in the self-idealization of an integral domain.

\begin{lemma} \label{lemma:assoc}
  Let $D$ be an integral domain and $\alpha, \beta \in R(D)$ and let $a,b,x,y \in D$ such that $\alpha = \begin{pmatrix} a & b \\ 0 & a \end{pmatrix}$ and $\beta = \begin{pmatrix} x & y \\ 0 & x \end{pmatrix}$.
The following statements are equivalent.
\begin{enumerate}
  \item $\alpha$ and $\beta$ are associates.
  \item There exists $\theta \in U(R(D))$ such that $\beta = \theta \alpha$.
  \item\label{assoc:cong} There exists $u \in U(D)$ such that $x = au$ and $y \equiv bu \mod a$.
\end{enumerate}
\end{lemma}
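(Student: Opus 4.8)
The plan is to establish the cycle $(2) \Rightarrow (1) \Rightarrow (3) \Rightarrow (2)$. The implication $(2) \Rightarrow (1)$ is immediate: if $\beta = \theta\alpha$ with $\theta \in U(R(D))$, then $\beta R(D) = \theta\alpha R(D) = \alpha R(D)$, which is exactly the general remark (recorded before the statement) that left multiplication by a unit preserves the generated ideal.

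For $(1) \Rightarrow (3)$, I would unpack the equality $\alpha R(D) = \beta R(D)$ into its two containments, producing $\gamma = \begin{pmatrix} s & t \\ 0 & s\end{pmatrix}$ and $\delta = \begin{pmatrix} p & q \\ 0 & p\end{pmatrix}$ in $R(D)$ with $\beta = \alpha\gamma$ and $\alpha = \beta\delta$. Multiplying out $\beta = \alpha\gamma$ gives $x = as$ and $y = at + bs$. Substituting, $\alpha = \alpha\gamma\delta$, and comparing diagonal entries forces $a(1 - sp) = 0$, while the off-diagonal entry yields $b(1-sp) = a(sq+tp)$. The crux is to deduce that $s$ is a unit of $D$: when $a \neq 0$ this is immediate since $D$ is a domain, so $sp = 1$. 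The genuinely fussy part is the zero-divisor range $a = 0$, where one must instead read off $b(1-sp) = 0$ and conclude $sp=1$ whenever $b \neq 0$, disposing of the trivial case $\alpha = 0$ by hand. Setting $u := s \in U(D)$, the relation $x = as$ becomes $x = au$, and $y = at + bu$ gives $y - bu = at \in aD$, that is, $y \equiv bu \mod a$. A slicker route to the first condition alone is to apply the norm homomorphism, since $\nr(\alpha R(D)) = aD$ and $\nr(\beta R(D)) = xD$ force $aD = xD$ and hence $x = au$ in the domain $D$; but one still needs the matrix relations to pin down the \emph{same} unit $u$ for the congruence, so I would keep the direct computation as the main line.

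For $(3) \Rightarrow (2)$ I would reverse-engineer $\theta$. Since $y \equiv bu \mod a$, write $y - bu = ac$ with $c \in D$ and set $\theta = \begin{pmatrix} u & c \\ 0 & u\end{pmatrix}$. By Lemma \ref{basic properties of r(d,d')}, $\theta \in U(R(D))$ because $u \in U(D)$. A direct multiplication gives $\theta\alpha = \begin{pmatrix} ua & ub + ca \\ 0 & ua\end{pmatrix}$, and using $x = au$ together with $y = bu + ac$ this equals $\beta$ on the nose.

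The main obstacle I anticipate is the bookkeeping in $(1) \Rightarrow (3)$ across the zero-divisor cases: because $R(D)$ has zero-divisors one cannot cancel freely, so the argument that $s$ is a unit must be routed through whichever of $a$ and $b$ is nonzero, with $\alpha = 0$ treated separately. Everything else reduces to routine $2 \times 2$ matrix computation.
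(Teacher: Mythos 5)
Your proposal is correct and follows essentially the same route as the paper: unpack associatedness into the two matrix equations, deduce that the norm of the connecting factor is a unit, and build $\theta$ explicitly from a witness of the congruence $y \equiv bu \mod a$. The only difference is cosmetic (you run the cycle $(2)\Rightarrow(1)\Rightarrow(3)\Rightarrow(2)$ rather than the paper's $(1)\Rightarrow(2)\Rightarrow(3)\Rightarrow(2),(1)$), and your explicit treatment of the cases $a=0$, $b\neq 0$ and $\alpha=0$ is in fact slightly more careful than the paper's, which tacitly assumes the cancellation $a = a x_1 a_1 \Rightarrow x_1a_1 = 1$.
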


\begin{proof}
 (1) $\Rightarrow$ (2):
 If $\alpha$ and $\beta$ are associates,
 then there are some $\gamma, \delta \in R(D)$ so that $\alpha = \beta \gamma$
 and $\beta = \alpha \delta$. Hence if $\gamma = \left(\begin{array}{cc}
 a_1 & b_1 \\
 0 & a_1 \\
 \end{array} \right)$ and $\delta = \left(\begin{array}{cc}
 x_1 & y_1 \\
 0 & x_1 \\
 \end{array} \right)$, then $a = x a_1$ and $x =a x_1$, and so
 $a_1, x_1 \in U(D)$. Thus $\gamma, \delta \in U(R(D))$ by Lemma \ref{basic properties of r(d,d')}.

 (2) $\Rightarrow$ (3): Let $\theta = \begin{pmatrix} u & v \\ 0 & u \end{pmatrix}$. By Lemma \ref{basic properties of r(d,d')}, $u \in U(D)$. From $\beta = \theta \alpha$ it follows that $x = au$ and $y = av + bu \equiv bu \mod a$.

 (3) $\Rightarrow$ (2) and (1): Let $v \in D$ be such that $y = bu + av$.
 Define $\theta = \begin{pmatrix} u & v \\ 0 & u \end{pmatrix}$. Then $\theta \in U(R(D))$ by Lemma \ref{basic properties of r(d,d')}
 and $\beta = \theta \alpha$. Thus, $\alpha$ and $\beta$ are associates.
\end{proof}

We write $\alpha \simeq \beta$ if $\alpha, \beta \in R(D)$ are associates.

\begin{lemma} \label{coprime}
  Let $D$ be a PID and let $\alpha = \begin{pmatrix} a & b \\ 0 & a \end{pmatrix} \in R(D)^\bullet$.
  If $a = cd$ with coprime $c,d \in D$, then there exist $\gamma, \delta \in R(D)$ with $\alpha = \gamma \delta$ and $\nr(\gamma) = c$, $\nr(\delta) = d$.
  This representation is unique in the sense that, if $\gamma', \delta' \in R(D)$ with $\alpha = \gamma' \delta'$ and $\nr(\gamma') \simeq c$, $\nr(\delta') \simeq d$, then $\gamma \simeq \gamma'$ and $\delta \simeq \delta'$.
\end{lemma}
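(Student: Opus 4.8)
The plan is to handle existence by an explicit construction and uniqueness by reducing to Lemma \ref{lemma:assoc}. For existence, I look for $\gamma$ and $\delta$ of the same shape as $\alpha$, say $\gamma = \begin{pmatrix} c & s \\ 0 & c \end{pmatrix}$ and $\delta = \begin{pmatrix} d & t \\ 0 & d \end{pmatrix}$, so that automatically $\nr(\gamma) = c$ and $\nr(\delta) = d$. Multiplying out gives $\gamma\delta = \begin{pmatrix} cd & ct + sd \\ 0 & cd \end{pmatrix}$, so it remains only to solve $ct + sd = b$ for $s, t \in D$. Since $c$ and $d$ are coprime in the PID $D$, B\'ezout's identity provides $u, v \in D$ with $uc + vd = 1$; setting $t = bu$ and $s = bv$ then yields $ct + sd = b(uc+vd) = b$, which completes the existence part.

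For uniqueness, suppose $\alpha = \gamma'\delta'$ with $\gamma' = \begin{pmatrix} c' & s' \\ 0 & c' \end{pmatrix}$, $\delta' = \begin{pmatrix} d' & t' \\ 0 & d' \end{pmatrix}$, and $c' \simeq c$, $d' \simeq d$. Since associate elements of the domain $D$ differ by a unit, I write $c' = cu_1$ and $d' = du_2$ with $u_1, u_2 \in U(D)$. Comparing norms gives $cd = \nr(\alpha) = c'd' = cd\,u_1 u_2$, and cancelling the nonzero $cd$ forces $u_1 u_2 = 1$. Comparing the off-diagonal entries gives $c't' + s'd' = b = ct + sd$.

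The final step is to extract the required congruences. Reducing $c't' + s'd' = ct + sd$ modulo $c$ and using $c \mid c'$ kills the terms $c't'$ and $ct$, leaving $s'd' \equiv sd \mod c$, i.e. $s'u_2 d \equiv sd \mod c$; since $\gcd(c,d)=1$ the element $d$ is a unit modulo $c$, so $s'u_2 \equiv s \mod c$, and multiplying by $u_1$ gives $s' \equiv su_1 \mod c$. Together with $c' = cu_1$, Lemma \ref{lemma:assoc}(\ref{assoc:cong}) then yields $\gamma' \simeq \gamma$. The symmetric computation modulo $d$ gives $t' \equiv tu_2 \mod d$ and hence $\delta' \simeq \delta$. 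The only points that require care are keeping the two units $u_1, u_2$ consistent (the relation $u_1u_2=1$ is what makes the congruences match Lemma \ref{lemma:assoc}) and invoking the coprimality of $c$ and $d$ exactly where a cancellation modulo $c$ or modulo $d$ is needed; no genuine difficulty arises beyond this bookkeeping.
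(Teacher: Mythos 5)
Your proof is correct and follows essentially the same route as the paper: existence via B\'ezout to solve $ct+sd=b$ with factors of norm $c$ and $d$, and uniqueness by comparing off-diagonal entries modulo $c$ and modulo $d$, cancelling using coprimality, and invoking Lemma \ref{lemma:assoc}(\ref{assoc:cong}). No issues.
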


\begin{proof}
  \emph{Existence:} Since $1 \in \GCD(c,d)$ and $D$ is a PID, there exist $e,f \in D$ such that $b = cf + ed$. Then $\gamma = \begin{pmatrix} c & e \\ 0 & c \end{pmatrix}$ and $\delta = \begin{pmatrix} d & f \\ 0 & d \end{pmatrix}$ are as claimed.

  \emph{Uniqueness:} Let $\gamma' = \begin{pmatrix} c' & e' \\ 0 & c' \end{pmatrix}$ and $\delta' = \begin{pmatrix} d' & f' \\ 0 & d' \end{pmatrix}$ with $c',e', d',f' \in D$ and suppose that $\alpha = \gamma'\delta'$. Let $u,v \in U(D)$ such that $c' = cu$ and $d' = dv$. Since $c'd' = cd$, necessarily $v = u^{-1}$. Since $cf + ed = c'f' + e'd' = c(f'u) + d(e'v)$, we have
  $c(f'u) \equiv cf \mod d$ and $f'u \equiv f \mod d$, i.e., $f' \equiv fv \mod d$. Therefore $\delta' \simeq \delta$ and similarly $\gamma' \simeq \gamma$.
\end{proof}

\begin{corollary} \label{coprime factorization}
  Let $D$ be a PID and let $\alpha \in R(D)^\bullet \setminus U(R(D))$. Then there exist $\beta_1,\ldots,\beta_n \in R(D)^\bullet$ of pairwise distinct prime power norm, such that $\alpha = \beta_1 \cdot\ldots\cdot \beta_n$. This representation is unique up to order and associates.
\end{corollary}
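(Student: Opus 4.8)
The plan is to induct on the number $n$ of pairwise non-associate prime divisors of $a := \nr(\alpha)$, using Lemma \ref{coprime} as the basic splitting tool. Since $\alpha \in R(D)^\bullet$ we have $a \neq 0$, and since $\alpha \notin U(R(D))$, Lemma \ref{basic properties of r(d,d')} shows $a$ is a nonunit. As $D$ is a PID, hence a UFD, I would write $a = p_1^{e_1}\cdots p_n^{e_n}$ with $p_1,\ldots,p_n$ pairwise non-associate primes and each $e_i \geq 1$; the prime powers $p_1^{e_1},\ldots,p_n^{e_n}$ are then pairwise coprime.

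For existence I argue by induction on $n$. If $n = 1$ there is nothing to do: take $\beta_1 = \alpha$. If $n \geq 2$, the factorization $a = p_1^{e_1}\cdot(p_2^{e_2}\cdots p_n^{e_n})$ is into coprime factors, so Lemma \ref{coprime} yields $\alpha = \beta_1\alpha_1$ with $\nr(\beta_1) = p_1^{e_1}$ and $\nr(\alpha_1) = p_2^{e_2}\cdots p_n^{e_n}$. Here $\alpha_1 \in R(D)^\bullet$ is a nonunit whose norm has $n-1$ distinct prime divisors, so the induction hypothesis produces $\beta_2,\ldots,\beta_n$ of the desired form, and $\alpha = \beta_1\cdots\beta_n$.

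For uniqueness I again induct on $n$. Suppose $\alpha = \beta_1\cdots\beta_n = \beta_1'\cdots\beta_m'$ are two factorizations into non-zero-divisors of pairwise distinct prime power norm. Since $\nr$ is a ring homomorphism, taking norms gives two factorizations of $a$ into pairwise coprime prime powers; by unique factorization in the UFD $D$ we get $m = n$, and after reordering $\nr(\beta_i) \simeq \nr(\beta_i')$ for all $i$ — and since the prime power norms are pairwise distinct, this reordering is forced. In particular $\nr(\beta_1)\simeq\nr(\beta_1')\simeq p_1^{e_1}$. Setting $\alpha_1 = \beta_2\cdots\beta_n$ and $\alpha_1' = \beta_2'\cdots\beta_n'$, both have norm associate to $d := p_2^{e_2}\cdots p_n^{e_n}$, and $p_1^{e_1}, d$ are coprime. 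Hence the uniqueness clause of Lemma \ref{coprime}, applied to $\alpha = \beta_1\alpha_1 = \beta_1'\alpha_1'$, gives $\beta_1 \simeq \beta_1'$ and $\alpha_1 \simeq \alpha_1'$.

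Finally, $\alpha_1$ is a nonunit non-zero-divisor whose norm has $n-1$ distinct prime divisors, and $\alpha_1 \simeq \alpha_1'$; writing $\alpha_1 = \theta\alpha_1'$ for a suitable $\theta \in U(R(D))$ (Lemma \ref{lemma:assoc}) turns $\beta_2'\cdots\beta_n'$ into a factorization $(\theta\beta_2')\beta_3'\cdots\beta_n'$ of $\alpha_1$ of the same type, whose factors have the same prime power norms. The induction hypothesis then yields $\beta_i \simeq \beta_i'$ for $2 \leq i \leq n$, completing the argument. I expect the only real subtlety to be the bookkeeping: verifying that the norm hypotheses of Lemma \ref{coprime} (namely $\nr(\beta_1')\simeq p_1^{e_1}$ and $\nr(\alpha_1')\simeq d$) are genuinely satisfied so that its uniqueness part applies, after which everything reduces to unique factorization in $D$ and the induction.
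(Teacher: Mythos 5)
Your proposal is correct and follows essentially the same route as the paper: existence by induction on the number of distinct prime divisors of the norm using Lemma \ref{coprime}, and uniqueness by matching norms via unique factorization in $D$ and then invoking the uniqueness clause of Lemma \ref{coprime}. The only difference is that you spell out the inductive bookkeeping in the uniqueness step (peeling off one factor at a time and absorbing the unit), which the paper leaves implicit.
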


\begin{proof}
  Let $\nr(\alpha) = p_1^{e_1}\cdot\ldots\cdot p_n^{e_n}$ with $n \ge 0$, $p_1,\ldots,p_n \in D$ pairwise distinct prime elements and $e_1,\ldots,e_n \ge 1$. By induction on $n$ and the previous lemma, there exist $\beta_1,\ldots,\beta_n \in R(D)^\bullet$ such that $\alpha = \beta_1\cdot\ldots\cdot\beta_n$ and $\nr(\beta_i) = p_i^{e_i}$ for all $i \in [1,n]$.

  Suppose $\alpha = \beta_1'\cdot\ldots\beta_m'$ is another such factorization.
  Since $D$ is a UFD, then $m=n$ and there exists a permutation $\pi \in \mathfrak S_n$ such that $\nr(\beta_{\pi(i)}') \simeq \nr(\beta_i)$ for all $i \in [1,n]$. The uniquenes statement of the previous lemma implies $\beta_i' \simeq \beta_i$ for all $i \in [1,n]$.
\end{proof}

As a consequence, to study factorizations of $\alpha \in R(D)^\bullet$, it is sufficient to study factorizations of $\alpha \in R(D)^\bullet$ with prime power norm.

We next give the first main result of this paper, which completely characterizes the irreducible elements of $R(D)$ when $D$ is a PID.

\begin{theorem} \label{irreducible}
Let $D$ be a PID and $\alpha = \left(\begin{array}{cc}
 a & b \\
 0 & a \\
 \end{array} \right) \in R(D)$. Then $\alpha$
 is irreducible if and only if either $(i)$ $a =0$ and $b \in U(D)$, $(ii)$ $a = p$ or $(iii)$
 $a = up^n$ and $1 \in \GCD(a,b)$ for some prime $p \in D$,
 $u \in U(D)$, and integer $n \geq 2$.
\end{theorem}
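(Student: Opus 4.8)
The plan is to route everything through the norm homomorphism $\nr\colon R(D) \to D$, reducing the factorization behaviour of $\alpha$ to divisibility in the PID $D$ while tracking the off-diagonal entry $b$ separately. The two structural inputs are Lemma \ref{basic properties of r(d,d')} (a factor is a unit of $R(D)$ exactly when its norm is a unit of $D$) and Corollary \ref{coprime factorization} (which reduces to the case that $\nr(\alpha)=a$ is a prime power up to units). The computational engine is multiplicativity of $\nr$: any factorization $\alpha = \beta\gamma$ with $\beta = \begin{pmatrix} x & y \\ 0 & x\end{pmatrix}$ and $\gamma = \begin{pmatrix} z & w \\ 0 & z\end{pmatrix}$ satisfies $xz = a$ and $xw + yz = b$. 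Combined with Lemma \ref{lemma:assoc}, this gives the recognition principle I will use repeatedly: a factor whose norm is a proper (non-associate) divisor of $a$ can never be an associate of $\alpha$. Units and $0$ are excluded at the outset, since each of (i), (ii), (iii) describes a nonzero nonunit.

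For the ``if'' direction I would treat the three cases separately. In case (ii), where $a$ is prime, $xz = p$ forces one of $x,z$ to be a unit of $D$; by Lemma \ref{basic properties of r(d,d')} the corresponding factor is a unit and the other is an associate of $\alpha$, with no hypothesis on $b$ required. In case (iii), where $a = up^n$ with $n \ge 2$ and $p \nmid b$, I write $x = u_1 p^i$ and $z = u_2 p^j$ with $i+j = n$ (possible since $xz = up^n$ in the UFD $D$); if both $i,j \ge 1$ then $b = u_1 p^i w + u_2 p^j y$ is divisible by $p$, contradicting $1 \in \GCD(a,b)$, so one exponent vanishes and its factor is a unit. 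In case (i), $\alpha=\begin{pmatrix}0&b\\0&0\end{pmatrix}$ with $b\in U(D)$; any factorization forces $xz=0$, so (say) $x=0$, whence $b=yz$, and $b\in U(D)$ forces $z\in U(D)$, making $\gamma$ a unit and $\beta$ an associate of $\alpha$.

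For the ``only if'' direction I would argue by contraposition: assume $\alpha$ is a nonzero nonunit satisfying none of (i)--(iii) and exhibit a factorization containing no associate of $\alpha$. If $a \neq 0$ has two distinct prime divisors, Corollary \ref{coprime factorization} already splits $\alpha$ into at least two factors of proper prime power norm, none associated to $\alpha$. This reduces to $a = up^n$; the case $n = 1$ is (ii), so I may assume $n \ge 2$ and $p \mid b$. The crux is then the explicit splitting
\[
  \begin{pmatrix} up^n & b \\ 0 & up^n \end{pmatrix}
  = \begin{pmatrix} p & 0 \\ 0 & p \end{pmatrix}
    \begin{pmatrix} up^{n-1} & b/p \\ 0 & up^{n-1} \end{pmatrix},
\]
valid precisely because $p \mid b$ and $n-1 \ge 1$: both factors have non-unit norm properly dividing $a$, so neither is an associate of $\alpha$, and $\alpha$ is reducible. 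Finally, if $a = 0$ and $b \notin U(D)$ with $b \neq 0$, the factorization $\begin{pmatrix} 0 & b \\ 0 & 0\end{pmatrix} = \begin{pmatrix} 0 & 1 \\ 0 & 0\end{pmatrix}\begin{pmatrix} b & 0 \\ 0 & b\end{pmatrix}$ does the same (the right factor has nonzero norm, the left is not an associate of $\alpha$ by Lemma \ref{lemma:assoc}), while $b=0$ gives $\alpha = 0$, which is not irreducible since $R(D)$ has zero-divisors.

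I expect the main obstacle to be the exponent bookkeeping in case (iii): the entire dichotomy between (ii) and (iii) rests on the observation that, for $n \ge 2$, divisibility $p \mid b$ is exactly the obstruction to irreducibility, and the whole argument turns on producing the explicit splitting above when $p \mid b$ and ruling it out when $p \nmid b$. The zero-divisor case $a = 0$ is technically separate but routine once the norm/associate recognition principle is in place.
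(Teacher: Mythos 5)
Your proposal is correct and follows essentially the same route as the paper: the same case split driven by the norm homomorphism, the same use of Lemma \ref{basic properties of r(d,d')} to detect unit factors, the reduction via Lemma \ref{coprime}/Corollary \ref{coprime factorization} when $a$ has two non-associated prime divisors, and the identical explicit splitting $\alpha = \left(\begin{smallmatrix} p & 0 \\ 0 & p \end{smallmatrix}\right)\left(\begin{smallmatrix} up^{n-1} & b/p \\ 0 & up^{n-1} \end{smallmatrix}\right)$ when $n \ge 2$ and $p \mid b$. The only cosmetic difference is that you phrase the necessity direction as a contrapositive, whereas the paper assumes irreducibility and derives a contradiction; the factorizations exhibited are the same.
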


\begin{proof}
  \emph{Necessity.} Assume that $a = 0$,
and let $\beta = \left(\begin{array}{cc}
 b & 0 \\
 0 & b \\
 \end{array} \right)$ and $\gamma = \left(\begin{array}{cc}
 0 & 1 \\
 0 & 0 \\
 \end{array} \right)$.
Then $\alpha = \beta \cdot \gamma$ and $\alpha R(D) \neq \beta R(D)$ because $b \neq 0$.
Hence $\alpha R(D) = \gamma R(D)$, and so $\gamma = \alpha \cdot \delta$ for  some
$\delta = \left(\begin{array}{cc}
 x & y \\
 0 & x \\
 \end{array} \right) \in R(D)$.
Thus $bx = 1$.

Next, assume that $a \neq 0$. If $a$ is not of the form $up^n$, then Lemma \ref{coprime} implies that $\alpha = \beta \cdot \gamma$ with $\nr(\beta)$ and $\nr(\gamma)$ nonzero nonunits.  Hence $\alpha$ is not irreducible, a contradiction.
Thus $a = up^n$ for some prime $p \in D$, $u \in U(D)$, and integer $n \geq 1$.
If $n=1$, then $up$ is also a prime element of $D$ and we have case (ii).

Finally, assume that $n \geq 2$ and $p^k \in \GCD(a,b)$ for some integer $k \geq 1$.
Let $b = b_1p^k$, where $b_1 \in D$. Then
$\alpha = \theta \cdot \xi$, where $\theta = \left(\begin{array}{cc}
 p & 0 \\
 0 & p \\
 \end{array} \right)$ and $\xi= \left(\begin{array}{cc}
 up^{n-1} & b_1p^{k-1} \\
 0 & up^{n-1} \\
 \end{array} \right)$, but $\theta, \xi \not\in \alpha R(D)$, a contradiction.
 This completes the proof.

 \emph{Sufficiency.} Let $\alpha = \beta \cdot \gamma$, where $\beta = \left(\begin{array}{cc}
 x_1 & y_1 \\
 0 & x_1 \\
 \end{array} \right)$ and $\gamma = \left(\begin{array}{cc}
 x_2 & y_2 \\
 0 & x_2 \\
 \end{array} \right)$. We will show that $\beta$ or $\gamma$ is a unit,
 and thus $\alpha$ is irreducible.

Case 1. $a = 0$ and $b \in U(D)$.
Note that $x_1 =0$ or $x_2=0$; so for convenience, let $x_2 = 0$.
Then $x_1y_2 = b$, and since $b \in U(D)$, we have $x_1 \in U(D)$. Thus $\beta$ is a unit of $R(D)$
by Lemma \ref{basic properties of r(d,d')}.

Case 2. $a = p$ for a prime $p \in D$.
Then $\alpha =  \beta \cdot \gamma$ implies that either $x_1$ or $x_2$ is a unit in $D$.
 Hence $\beta$ or $\gamma$ is a unit in $R(D)$ by Lemma \ref{basic properties of r(d,d')}.

 Case 3. $a = up^n$ for a prime $p \in D$, $u \in U(D)$, $n \geq 2$ and $1 \in \GCD(a,b)$. Since
$p$ is a prime and  $\alpha = \beta \cdot \gamma$, we have $\beta = \left(\begin{array}{cc}
 vp^k & x \\
 0 & vp^k \\
 \end{array} \right)$ and $\gamma = \left(\begin{array}{cc}
 wp^{n-k} & y \\
 0 & wp^{n-k} \\
 \end{array} \right)$ for some $0 \leq k, n-k \leq n$, $x,y \in D$,
 and $v, w \in U(D)$ with $vw =u$. Hence
 $b = p^kvy+ p^{n-k}wx$, and thus $k=0$ or $n-k = 0$
 because $a$ and $b$ are coprime.
 Therefore $\beta$ or $\gamma$ is a unit in $R(D)$
 by Lemma \ref{basic properties of r(d,d')}.
\end{proof}

\begin{corollary} \label{corollary1}
Let $D$ be a PID and $\alpha = \left(\begin{array}{cc}
 a & b \\
 0 & a \\
 \end{array} \right) \in R(D)$ be a nonzero nonunit
 such that $c \in \GCD(a,b)$, $a = ca_1$, and $b = cb_1$
 for some $c, a_1, b_1 \in D$. Let $c=up_1^{e_1} \cdots p_n^{e_n}$ and
$a_1 = q_1^{k_1} \cdots q_m^{k_m}$ $($when $a \neq 0)$ be prime factorizations
of $c$ and $a_1$, respectively, where $u \in U(D)$. The
following is a factorization of $\alpha$ into irreducible elements.
\begin{enumerate}
\item If $a = 0$, then $\alpha = \begin{pmatrix} 0 & u \\ 0 & 0 \end{pmatrix} \mathbf{\prod}_{i=1}^n \begin{pmatrix} p_i & 0 \\ 0 & p_i \end{pmatrix}^{e_i}.$
\item If $a \neq 0$, then $\alpha = \begin{pmatrix} u & 0 \\ 0 & u \end{pmatrix} (\prod_{i=1}^n \begin{pmatrix} p_i & 0 \\ 0 & p_i \end{pmatrix}^{e_i}) (\prod_{j=1}^m \begin{pmatrix} q_j^{k_j} & c_j \\ 0 & q_j^{k_j} \end{pmatrix})$
 for some $c_j \in D$ with $1 \in \GCD(c_j, q_j)$.
 \end{enumerate}
\end{corollary}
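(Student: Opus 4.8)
The plan is to read off from Theorem \ref{irreducible} that every factor displayed is irreducible, and then to verify the products equal $\alpha$ by direct multiplication; the genuine content is the construction of the off-diagonal entries $c_j$ together with the coprimality $1 \in \GCD(c_j,q_j)$. First I would record the irreducibility of each factor: $\begin{pmatrix} p_i & 0 \\ 0 & p_i \end{pmatrix}$ has prime norm $p_i$ and so is irreducible by case (ii); $\begin{pmatrix} 0 & u \\ 0 & 0 \end{pmatrix}$ with $u \in U(D)$ is irreducible by case (i); and $\begin{pmatrix} q_j^{k_j} & c_j \\ 0 & q_j^{k_j} \end{pmatrix}$ is irreducible by case (ii) when $k_j = 1$, and by case (iii) when $k_j \ge 2$, the latter precisely when $1 \in \GCD(c_j, q_j)$. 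The diagonal matrix $\begin{pmatrix} u & 0 \\ 0 & u \end{pmatrix}$ is a unit by Lemma \ref{basic properties of r(d,d')} and may be absorbed into any one irreducible factor.

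For (1), when $a = 0$ the hypotheses give $a_1 = 0$ and force $c$ to be associated to $b$, so $b_1 \in U(D)$ and we may take $c = b$. With $P = \prod_{i=1}^n p_i^{e_i}$ one computes $\begin{pmatrix} 0 & u \\ 0 & 0 \end{pmatrix}\begin{pmatrix} P & 0 \\ 0 & P \end{pmatrix} = \begin{pmatrix} 0 & uP \\ 0 & 0 \end{pmatrix} = \begin{pmatrix} 0 & c \\ 0 & 0 \end{pmatrix} = \alpha$, so the displayed product reproduces $\alpha$ and each factor is irreducible by the cases above.

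For (2), the key observation is that $\begin{pmatrix} u & 0 \\ 0 & u \end{pmatrix}\prod_{i=1}^n \begin{pmatrix} p_i & 0 \\ 0 & p_i \end{pmatrix}^{e_i} = \begin{pmatrix} c & 0 \\ 0 & c \end{pmatrix}$ acts as the scalar $c$, so multiplying it against the second product scales both entries by $c$. Since $a = c a_1$ and $b = c b_1$, the asserted equality reduces to finding $c_1,\ldots,c_m$ with $\prod_{j=1}^m \begin{pmatrix} q_j^{k_j} & c_j \\ 0 & q_j^{k_j} \end{pmatrix} = \begin{pmatrix} a_1 & b_1 \\ 0 & a_1 \end{pmatrix} =: \beta$, where $1 \in \GCD(a_1, b_1)$ because $c$ is a greatest common divisor of $a$ and $b$. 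When $\beta$ is a nonunit (that is, $a_1 \notin U(D)$), the norms $q_1^{k_1},\ldots,q_m^{k_m}$ are pairwise coprime, so Corollary \ref{coprime factorization} applied to $\beta$ produces $\beta = \beta_1 \cdots \beta_m$ with $\nr(\beta_j) = q_j^{k_j}$, and writing $\beta_j = \begin{pmatrix} q_j^{k_j} & c_j \\ 0 & q_j^{k_j} \end{pmatrix}$ defines the $c_j$; the degenerate case $a_1 \in U(D)$ contributes only a unit, which is absorbed into the prefactor, so the identity holds up to associates.

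The main obstacle is checking $1 \in \GCD(c_j, q_j)$, which I would derive from the coprimality of $a_1$ and $b_1$. For a fixed $j$, regroup $\beta = \beta_j \cdot \delta_j$ with $\delta_j = \prod_{i \ne j} \beta_i$ of norm $d_j = a_1 / q_j^{k_j}$, and multiply out to obtain $b_1 = q_j^{k_j} f + c_j d_j$ for the off-diagonal entry $f$ of $\delta_j$. If $q_j \mid c_j$, then $q_j$ divides both $q_j^{k_j} f$ and $c_j d_j$, hence $q_j \mid b_1$; but $q_j \mid a_1$, contradicting $1 \in \GCD(a_1, b_1)$. Thus $1 \in \GCD(c_j, q_j)$ for every $j$, each factor of the second product is irreducible, and reassembling with the scalar $c$ recovers $\alpha$. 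I expect the only delicate bookkeeping to be the consistent regrouping needed for this off-diagonal relation at each $j$.
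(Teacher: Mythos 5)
Your proof is correct and follows essentially the same route as the paper's: peel off the scalar factor $\begin{pmatrix} c & 0 \\ 0 & c \end{pmatrix}$ and then split $\begin{pmatrix} a_1 & b_1 \\ 0 & a_1 \end{pmatrix}$ into factors of pairwise coprime prime-power norm via the B\'ezout/coprime-splitting mechanism of Lemma \ref{coprime}. The only difference is organizational: the paper redoes the splitting by hand and tracks $1 \in \GCD(c_j, q_j)$ through the recursion, whereas you invoke Corollary \ref{coprime factorization} for existence and verify the coprimality a posteriori from $1 \in \GCD(a_1,b_1)$ --- both hinge on the same computation.
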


\begin{proof}
(1) Clear.

(2) We first note that $\alpha = \left(\begin{array}{cc}
 c & 0 \\
 0 & c \\
 \end{array} \right)\left(\begin{array}{cc}
 a_1 & b_1 \\
 0 & a_1 \\
 \end{array} \right)$ and $$\left(\begin{array}{cc}
 c & 0 \\
 0 & c \\
 \end{array} \right) = \left(\begin{array}{cc}
 u & 0 \\
 0 & u \\
 \end{array} \right) \left(\begin{array}{cc}
 p_1 & 0 \\
 0 & p_1 \\
 \end{array} \right)^{e_1} \cdots \left(\begin{array}{cc}
 p_n & 0 \\
 0 & p_n \\
 \end{array} \right)^{e_n}.$$

 Next, assume that $a_1 = b_2d_2$ for some $b_2, d_2 \in D$ with $1 \in \GCD(b_2, d_2)$.
 Then there are some $x, y \in D$ such that $b_2(xb_1)+d_2(yb_1) = b_1$ because $D$ is a PID,
and hence $\left(\begin{array}{cc}
 a_1 & b_1 \\
 0 & a_1 \\
 \end{array} \right) = \left(\begin{array}{cc}
 b_2 & yb_1 \\
 0 & b_2 \\
 \end{array} \right)\left(\begin{array}{cc}
 d_2 & xb_1 \\
 0 & d_2 \\
 \end{array} \right).$
Note that $1 \in \GCD(a_1,b_1)$; hence $1 \in \GCD(b_2, yb_1)$ and $1 \in \GCD(d_2, xb_1)$.
So by repeating this process, we have
$\left(\begin{array}{cc}
 a_1 & b_1 \\
 0 & a_1 \\
 \end{array} \right) =
\prod_{j=1}^m \begin{pmatrix} q_j^{k_j} & c_j \\ 0 & q_j^{k_j} \end{pmatrix}$
for some $c_j \in D$ with $1 \in \GCD(c_j, q_j)$.
\end{proof}

\begin{corollary} \label{prime element}
If $D$ is a PID, then $\left(\begin{array}{cc}
 0 & 1 \\
 0 & 0 \\
\end{array} \right)$ is the unique prime element (up to associates) of $R(D)$.
\end{corollary}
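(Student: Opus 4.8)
The plan is to verify directly that $\gamma = \begin{pmatrix} 0 & 1 \\ 0 & 0 \end{pmatrix}$ generates a prime ideal, and then to exploit the nilpotency relation $\gamma^2 = 0$ to show that every prime element of $R(D)$ divides $\gamma$, which pins it down up to associates.

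First, for primality, I would compute $\gamma R(D)$. Multiplying $\gamma$ by an arbitrary element $\begin{pmatrix} x & y \\ 0 & x \end{pmatrix}$ gives $\begin{pmatrix} 0 & x \\ 0 & 0 \end{pmatrix}$, so $\gamma R(D) = \{\, \begin{pmatrix} 0 & t \\ 0 & 0 \end{pmatrix} \mid t \in D \,\}$, which is precisely the kernel of the norm homomorphism $\nr \colon R(D) \to D$. Since $\nr$ is surjective and $D$ is an integral domain, $R(D)/\gamma R(D) \cong D$ is a domain; hence $\gamma R(D)$ is a prime ideal and $\gamma$ is a prime element. Equivalently, under the isomorphism $R(D) \cong D[X]/\langle X^2 \rangle$ the element $\gamma$ corresponds to $X$, and $\langle X \rangle$ is prime because $D[X]/\langle X \rangle \cong D$.

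For uniqueness, let $\alpha = \begin{pmatrix} a & b \\ 0 & a \end{pmatrix}$ be an arbitrary prime element. The key observation is that $\gamma^2 = 0 \in \alpha R(D)$; since $\alpha R(D)$ is prime, this forces $\gamma \in \alpha R(D)$, that is, $\alpha \mid \gamma$. Writing $\gamma = \alpha \delta$ and applying $\nr$ yields $0 = a \cdot \nr(\delta)$. I would then argue that $a = 0$: if $a \neq 0$, then $\nr(\delta) = 0$, so $\delta = \begin{pmatrix} 0 & y \\ 0 & 0 \end{pmatrix}$ and $\alpha\delta = \begin{pmatrix} 0 & ay \\ 0 & 0 \end{pmatrix}$; equating with $\gamma$ gives $ay = 1$, whence $a \in U(D)$ and $\alpha$ is a unit by Lemma \ref{basic properties of r(d,d')}, contradicting the fact that a prime element is a nonunit. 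Thus $a = 0$, so $\alpha \in \ker \nr$ has the form $\begin{pmatrix} 0 & b \\ 0 & 0 \end{pmatrix}$; since $\alpha$ is irreducible, Theorem \ref{irreducible}(i) gives $b \in U(D)$, and then Lemma \ref{lemma:assoc} (taking $u = b$, and noting that congruence modulo $0$ is equality) shows $\alpha \simeq \gamma$.

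The argument is essentially complete once the nilpotency trick is in place. The only point requiring care is the final appeal to Lemma \ref{lemma:assoc}, where one must record that irreducibility forces $b \in U(D)$ so that $u = b$ is a genuine unit of $D$. The main conceptual step---and the one that sidesteps a tedious case analysis over the three irreducible types of Theorem \ref{irreducible}---is the recognition that $\gamma^2 = 0$ instantly reduces the question to prime elements of norm zero.
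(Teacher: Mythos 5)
Your proof is correct, and the uniqueness half takes a genuinely different route from the paper. For primality of $\gamma$ the two arguments are essentially equivalent (the paper verifies the prime-ideal condition directly on products; you identify $\gamma R(D)$ with $\ker \nr$ and pass to the quotient $R(D)/\gamma R(D)\cong D$, which is a bit cleaner). The real divergence is in uniqueness: the paper first invokes Theorem \ref{irreducible} to list all irreducibles and then, for each irreducible with $a\neq 0$, exhibits the ad hoc witness $\left(\begin{smallmatrix} a & b-1 \\ 0 & a\end{smallmatrix}\right)$, whose square lies in $\alpha R(D)$ while the element itself does not, so $\alpha R(D)$ is not even radical. You instead observe that the nilpotent $\gamma$ must lie in every prime ideal, so every prime element divides $\gamma$, and a norm computation then forces $a=0$ and $\alpha\simeq\gamma$. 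This is more conceptual (primes contain the nilradical), needs no witness element, and in fact barely needs Theorem \ref{irreducible}: once $a=0$, the equation $\gamma=\alpha\delta$ reads $bx=1$ in the upper-right entry, giving $b\in U(D)$ directly, so your one remaining appeal to Theorem \ref{irreducible}(i) could be dropped. The only hypotheses you do use --- that a prime element is by definition a nonzero nonunit, and (if you keep that appeal) that prime implies irreducible --- are both available in the paper, so the argument is complete as written.
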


\begin{proof}
Clearly, prime elements are irreducible, and hence
by Theorem \ref{irreducible}, we have three cases to consider.
Let $\alpha = \left(\begin{array}{cc}
 a & b \\
 0 & a \\
\end{array} \right) \in R(D)$ be irreducible.

Case 1. $a = 0$ and $b \in U(D)$.
 Note that if we set $I = \left(\begin{array}{cc}
 0 & 1 \\
 0 & 0 \\
 \end{array} \right)$, then $\alpha = I \cdot \left(\begin{array}{cc}
 b & 0 \\
 0 & b \\
 \end{array} \right)$ and $\left(\begin{array}{cc}
 b & 0 \\
 0 & b \\
 \end{array} \right) \in U(R(D))$ by Lemma \ref{basic properties of r(d,d')}; so $\alpha$
 and $I$ are associates. Let $\beta = \left(\begin{array}{cc}
 x & y \\
 0 & x \\
 \end{array} \right), \gamma =  \left(\begin{array}{cc}
 c & d \\
 0 & c \\
 \end{array} \right)\in R(D)$.
Then $\beta \gamma \in IR(D)$ if and only if $xc =0$; so if $x = 0$ (for convenience),
then $\beta \in IR(D)$. Thus $I$ is a prime.

Cases 2 and 3. $a \neq 0$. Note that
\begin{eqnarray*}
\left(\begin{array}{cc}
 a & b-1 \\
 0 & a \\
 \end{array} \right)^2
 &=& \left(\begin{array}{cc}
 a^2 & 2a(b-1) \\
 0 & a^2 \\
 \end{array} \right)\\
 &=& \left(\begin{array}{cc}
 a & b \\
 0 & a \\
 \end{array} \right) \left(\begin{array}{cc}
 a & b-2 \\
 0 & a \\
 \end{array} \right) \in \alpha R(D),
 \end{eqnarray*}
but $\left(\begin{array}{cc}
 a & b-1 \\
 0 & a \\
 \end{array} \right) \not\in \alpha R(D)$ because $a \not\in U(D)$. Thus $\alpha$ is not a prime.
\end{proof}

For zero-divisors and elements with prime power norm, the following lemma further refines Corollary \ref{corollary1}, by giving all possible factorizations, up to order and associates. The general case can be obtained in combination with Corollary \ref{coprime factorization}.

\begin{lemma} \label{lemma:fact}
  Let $D$ be a PID, and let $\alpha = \begin{pmatrix} a & b \\ 0 & a \end{pmatrix} \in R(D) \setminus \{\, 0 \,\}$ with $a,b \in D$.
  \begin{enumerate}
    \item \label{fact:zd} Suppose $a = 0$ and $b = q_1 \cdot\ldots\cdot q_n$, with (possibly associated) prime powers $q_1,\ldots,q_n \in D$.
      Then, for every choice of $a_1,\ldots,a_n \in D$,
      \[
        \alpha = \begin{pmatrix} 0 & 1 \\ 0 & 0 \end{pmatrix} \prod_{i=1}^n \begin{pmatrix} q_i & a_i \\ 0 & q_i \end{pmatrix},
      \]
      and this is a factorization into irreducibles if and only if for all $i \in [1,n]$ either $q_i$ is prime or $1 \in \GCD(q_i, a_i)$.

    \item \label{fact:nzd} Suppose $a = p^n$ with $p \in D$ a prime element and $n \in \bN$. For all $l \in [1,n]$ let $m_l \in \bN_0$ and for all $j \in [1,m_l]$ let $a_{l,j} \in D$. Then
      \[
        \alpha = \prod_{l = 1}^n \prod_{j=1}^{m_l} \begin{pmatrix} p^l & a_{l,j} \\ 0 & p^l \end{pmatrix}
      \]
      if and only if $n = \sum_{l=1}^n m_l l$ and $b = \sum_{l=1}^n p^{n-l}(\sum_{j=1}^{m_l} a_{l,j})$.
      This is a product of irreducibles if and only if $1 \in \GCD(a_{l,j}, p)$ for all $l \in [2,n]$ and $j \in [1,m_l]$.
  \end{enumerate}

  Up to order and associativity of the factors, all the factorizations of $\alpha$ are of this form.
\end{lemma}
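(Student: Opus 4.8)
The plan is to prove the three distinct assertions in turn: that each displayed product genuinely equals $\alpha$ (for arbitrary entries, resp. under the two arithmetic conditions), that it is a factorization into irreducibles exactly under the stated coprimality conditions, and finally that every factorization of $\alpha$ arises this way up to order and associates. The first two assertions are direct computations governed by the norm homomorphism $\nr$ together with the classification in Theorem \ref{irreducible}; the substantive part is the final completeness claim.

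\textbf{The displayed products.} For \eqref{fact:zd}, multiplying out $\prod_{i=1}^n \begin{pmatrix} q_i & a_i \\ 0 & q_i\end{pmatrix}$ produces an element of norm $\prod_i q_i = b$, and left multiplication by $\begin{pmatrix} 0 & 1 \\ 0 & 0\end{pmatrix}$ annihilates the diagonal, returning $\begin{pmatrix} 0 & b \\ 0 & 0\end{pmatrix} = \alpha$ regardless of the $a_i$. For \eqref{fact:nzd}, a short induction gives that a product of matrices $\begin{pmatrix} s_i & t_i \\ 0 & s_i\end{pmatrix}$ equals $\begin{pmatrix} \prod_i s_i & \sum_i t_i \prod_{j\neq i} s_j \\ 0 & \prod_i s_i\end{pmatrix}$; specializing the diagonals to the powers $p^l$ yields norm $p^{\sum_l m_l l}$ and off-diagonal entry $\sum_l p^{n-l}\sum_j a_{l,j}$, so the product equals $\alpha$ precisely when $n = \sum_l m_l l$ and $b = \sum_l p^{n-l}\sum_j a_{l,j}$. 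In each case the irreducibility of the factors is read directly from Theorem \ref{irreducible}: $\begin{pmatrix} 0 & 1 \\ 0 & 0\end{pmatrix}$ is of type $(i)$, and a factor of norm $p^l$ is of type $(ii)$ when $l=1$ (always irreducible) and of type $(iii)$ when $l\geq 2$ (irreducible iff its off-diagonal entry is coprime to $p$). This yields the two stated equivalences.

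\textbf{Completeness.} I begin with an arbitrary factorization $\alpha = \gamma_1 \cdots \gamma_k$ into irreducibles, writing $\gamma_i = \begin{pmatrix} s_i & t_i \\ 0 & s_i\end{pmatrix}$, and apply $\nr$ to obtain $\prod_i s_i = a$. By Theorem \ref{irreducible} every irreducible of $R(D)$ has norm either $0$ or a prime power. In case \eqref{fact:zd}, $a=0$ forces some $s_i=0$; since two factors of norm $0$ multiply to the zero matrix and $\alpha \neq 0$, exactly one factor, say $\gamma_1$, has norm $0$, and Theorem \ref{irreducible}$(i)$ makes it $\begin{pmatrix} 0 & v \\ 0 & 0\end{pmatrix}$ with $v\in U(D)$, so $\gamma_1 \simeq \begin{pmatrix} 0 & 1 \\ 0 & 0\end{pmatrix}$. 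The remaining $s_i$ are prime powers with $v\prod_{i\geq2} s_i = b$, and absorbing $v$ into one factor exhibits them as a prime-power decomposition of $b$; each $\gamma_i$ ($i\geq 2$) then matches the form in \eqref{fact:zd}, the coprimality clause being exactly its irreducibility condition.

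In case \eqref{fact:nzd} no $s_i$ vanishes, and since $\prod_i s_i = p^n$, unique factorization in $D$ forces $s_i = u_i p^{l_i}$ with $u_i\in U(D)$ and $l_i\geq 1$; matching $p$-adic valuations gives $\sum_i l_i = n$, hence $\prod_i u_i = 1$. Grouping indices by the value of $l_i$ defines the $m_l$ with $n = \sum_l m_l l$. By Lemma \ref{lemma:assoc} each $\gamma_i$ is associated to $\begin{pmatrix} p^{l_i} & t_i u_i^{-1} \\ 0 & p^{l_i}\end{pmatrix}$; setting $a_{l,j} := t_i u_i^{-1}$ for the factors of exponent $l$ and using $\prod_{j\neq i} s_j = u_i^{-1} p^{n-l_i}$, the off-diagonal of the product becomes $\sum_i t_i u_i^{-1} p^{n-l_i} = \sum_l p^{n-l}\sum_j a_{l,j} = b$. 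Thus the original factorization agrees, up to order and associates, with the displayed one for these parameters, and the conditions transfer since $p\nmid t_i$ iff $p\nmid t_i u_i^{-1}$. The one delicate point is precisely this last step: replacing factors by associates could a priori spoil the product, and what saves the argument is the identity $\prod_i u_i = 1$ (free from the valuation count), which makes the explicit choice $a_{l,j}=t_i u_i^{-1}$ reproduce $b$ exactly rather than only up to a unit. Everything else reduces to the product formula for upper-triangular matrices together with Theorem \ref{irreducible} and Lemma \ref{lemma:assoc}.
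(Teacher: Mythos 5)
Your proof is correct and follows the same route the paper takes, namely the explicit product formula for matrices $\begin{pmatrix} s & t \\ 0 & s\end{pmatrix}$ combined with the classification of irreducibles in Theorem \ref{irreducible} and the description of associates in Lemma \ref{lemma:assoc} --- the paper compresses all of this into ``checked by a straightforward calculation.'' You supply the details the paper omits, and in particular you correctly isolate the one genuinely delicate point in the completeness argument: normalizing each factor to have exact norm $p^{l_i}$ is harmless only because the valuation count forces $\prod_i u_i = 1$, so the product is preserved on the nose rather than merely up to a unit.
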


\begin{proof}
  This is checked by a straightforward calculation. The statement about the irreducibles follows from the characterization of the irreducible elements in Theorem \ref{irreducible}. That every representation of $\alpha$ as a product of irreducibles is, up to order and associates, one of the stated ones also follows from this characterization.
\end{proof}

\begin{corollary} \label{bfr}
Let $D$ be a PID.
\begin{enumerate}
\item $R(D)$ is a BFR.
\item $R(D)$ is a FFR if and only if $D/pD$ is finite for all prime elements $p \in D$.
\item If $D$ is a field, then every nonzero nonunit of $R(D)$ is a prime,
and hence $R(D)$ is a UFR with a unique nonzero (prime) ideal.
\end{enumerate}
\end{corollary}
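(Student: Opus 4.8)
The plan is to treat the three parts separately, using the norm homomorphism $\nr$ for (1), the explicit factorizations of Lemma \ref{lemma:fact} together with Corollary \ref{coprime factorization} for (2), and Corollary \ref{prime element} for (3). Throughout, atomicity is already known, so for (1) and (2) only the quantitative statements need proof.

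For \emph{(1)}, it remains to bound the lengths of factorizations into nonunits. Given a nonzero nonunit $\alpha$ with $\nr(\alpha) = a$, write $\alpha = \alpha_1 \cdots \alpha_n$ with all $\alpha_i$ nonunits; by Lemma \ref{basic properties of r(d,d')} each $\nr(\alpha_i)$ is then a nonunit of $D$. If $a \neq 0$, multiplicativity of $\nr$ forces every $\nr(\alpha_i) \neq 0$, so $n$ is at most the number of prime factors of $a$ counted with multiplicity, since $D$ is a UFD. If $a = 0$, I would note that two factors of norm $0$ already multiply to $0$, so (as $\alpha \neq 0$ and $R(D)$ is commutative) exactly one $\alpha_i$ is a zero-divisor; the remaining factors have nonzero nonunit norm whose product divides the off-diagonal entry $b$, and the previous count applies. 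Either way one obtains an explicit bound $N(\alpha)$.

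For \emph{(2)}, in the ``if'' direction I assume every $D/pD$ is finite. By Corollary \ref{coprime factorization} a factorization of $\alpha$ splits uniquely across the coprime prime-power components of $\nr(\alpha)$, so it suffices to treat prime-power-norm elements and zero-divisors. The crucial point, via Lemma \ref{lemma:assoc}, is that the irreducibles whose norm is associate to $p^l$ form only finitely many associate classes, indexed by the residue of the off-diagonal entry modulo $p^l$, and $\card{D/p^l D} = \card{D/pD}^l$ is finite. Combined with the length bound from (1), every factorization of $\alpha$ is, up to order and associates, a bounded multiset drawn from the finite pool of irreducible associate classes whose norm divides $\nr(\alpha)$ (together with the prime $\begin{pmatrix} 0 & 1 \\ 0 & 0 \end{pmatrix}$ in the zero-divisor case), so there are finitely many. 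For the ``only if'' direction, if some $D/pD$ is infinite I would exhibit $\begin{pmatrix} p^2 & 0 \\ 0 & p^2 \end{pmatrix}$: by Lemma \ref{lemma:fact}(\ref{fact:nzd}) its factorizations into irreducibles are exactly $\begin{pmatrix} p & t \\ 0 & p \end{pmatrix}\begin{pmatrix} p & -t \\ 0 & p \end{pmatrix}$ for $t \in D$, and since the associate class of each factor is determined by $t \bmod p$ while $t \mapsto \{\, t, -t \,\} \bmod p$ is at most two-to-one, infinitely many residues give infinitely many factorizations.

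For \emph{(3)}, when $D$ is a field every nonzero element of $D$ is a unit, so by Lemma \ref{basic properties of r(d,d')} a nonzero nonunit of $R(D)$ has norm $0$ and hence equals $\begin{pmatrix} 0 & b \\ 0 & 0 \end{pmatrix}$ with $b \in U(D)$; this is associate to $I = \begin{pmatrix} 0 & 1 \\ 0 & 0 \end{pmatrix}$, which is prime by Corollary \ref{prime element}. Thus every nonzero nonunit is prime, every nonzero principal ideal is either $R(D)$ or the principal prime ideal $IR(D)$, and $R(D)$ is a UFR. For the last assertion, any nonzero proper ideal consists of nonunits, hence lies in $IR(D)$ as a set, and contains some $\begin{pmatrix} 0 & b \\ 0 & 0 \end{pmatrix}$ with $b$ a unit, hence contains $I$; so $IR(D)$ is the unique nonzero proper ideal. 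The main obstacle is the FFR counting in (2): establishing cleanly that factorizations up to order and associates biject with bounded multisets from the finite pool of associate classes in the ``if'' part, and exhibiting a provably infinite family in the ``only if'' part — the reduction through Corollary \ref{coprime factorization} and the associate criterion of Lemma \ref{lemma:assoc} are exactly what keep this under control.
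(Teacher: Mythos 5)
Your proposal is correct and follows essentially the same route as the paper: the norm-based length bound (via unique factorization in $D$, with the extra factor $\begin{pmatrix} 0 & 1 \\ 0 & 0 \end{pmatrix}$ in the zero-divisor case) for (1), finiteness of associate classes of potential irreducible divisors via Lemma \ref{lemma:assoc} together with the element $\begin{pmatrix} p^2 & 0 \\ 0 & p^2 \end{pmatrix}$ for (2), and reduction of every nonzero nonunit to the prime $I$ for (3). Your added detail that $t \mapsto \{\, t, -t \,\} \bmod p$ is at most two-to-one makes the ``infinitely many non-associated factorizations'' claim slightly more explicit than the paper's, but the argument is the same.
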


\begin{proof}
(1) By Corollary \ref{corollary1}, $R(D)$ is atomic, and if $\alpha = \left(\begin{array}{cc}
 a & b \\
 0 & a \\
 \end{array} \right) \in R(D)$, then the lengths of factorizations of $\alpha$ into irreducible elements
are less than or equal to that of the prime factorization of $a$ or $b$ in $D$, plus at most one.
Thus $R(D)$ is a BFR.

(2)
Suppose first that $D/pD$ is finite for all prime elements $p \in D$. Then also $D/p^nD$ is finite for all $n \ge 1$ and all prime elements $p \in D$. Hence, by the Chinese Remainder Theorem, $D/cD$ is finite for all nonzero $c \in D$.

Let $c \in D^\bullet$. By Lemma \ref{lemma:assoc}(\ref{assoc:cong}) there exist, up to associativity, only finitely many elements $\gamma \in R(D)$ with $\nr(\gamma) \simeq c$. If $\alpha \in R(D)^\bullet$ and $\gamma | \alpha$, then $\nr(\gamma) | \nr(\alpha)$, and therefore there are, up to associativity, only finitely many irreducibles that can possibly divide $\alpha$. Together with (1), this implies that every $\alpha \in R(D)^\bullet$ has only finitely many factorizations.

If $\alpha = \begin{pmatrix} 0 & b \\ 0 & 0 \end{pmatrix} \in R(D)$ is a zero-divisor, then every factorization has exactly one factor associated to $\begin{pmatrix} 0 & 1 \\ 0 & 0 \end{pmatrix}$ and if $\gamma$ is any other factor in the factorization then $\nr(\gamma) \mid b$ (cf. Lemma \ref{lemma:fact}(\ref{fact:zd})). By the same argument as before, $\alpha$ has only finitely many factorizations.

  For the converse, suppose that $p \in D$ is a prime element and $\card{D/pD} = \infty$.
  Since
  \[
    \begin{pmatrix} p^2 & 0 \\ 0 & p^2 \end{pmatrix} = \begin{pmatrix} p & a \\ 0 & p \end{pmatrix} \begin{pmatrix} p & -a \\ 0 & p \end{pmatrix},
  \]
  for all $a \in D$, $\begin{pmatrix} p^2 & 0 \\ 0 & p^2 \end{pmatrix}$ has infinitely many (non-associated) factorizations in $R(D)$.

 (3) Let $\alpha = \left(\begin{array}{cc}
 a & b \\
 0 & a \\
 \end{array} \right) \in R(D)$ be a nonzero nonunit.
Since $D$ is a field, by Lemma \ref{basic properties of r(d,d')}, $a = 0$ and $b \in U(D)$.
Hence $\alpha$ is associated with $I := \left(\begin{array}{cc}
 0 & 1 \\
 0 & 0 \\
 \end{array} \right)$, and so $\alpha$ is a prime
by the proof of Corollary \ref{prime element}.
Thus $R(D)$ is a UFR and
$IR(D)$ is a unique nonzero (prime) ideal of $R(D)$.
\end{proof}

If $D$ is a PID but not a field, we will see in Corollary \ref{elasticity} that $R(D)$ is not a UFR, even when $D$ is the ring of integers.

We next prove that every nonunit of $R(D)$ can be written as a (finite) product of primary elements.

\begin{lemma} \label{lemma4}
  Let $R$ be a commutative ring.
If $a \in R$ is such that $\sqrt{aR}$ is a maximal ideal, then $aR$ is primary.
\end{lemma}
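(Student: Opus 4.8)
The plan is to verify the defining property of a primary ideal directly. Write $\mathfrak m = \sqrt{aR}$, which is maximal by hypothesis. Recall that $aR$ is primary precisely when, for all $x,y \in R$ with $xy \in aR$, either $x \in aR$ or $y \in \sqrt{aR} = \mathfrak m$. So I would fix $x,y \in R$ with $xy \in aR$ and $y \notin \mathfrak m$, and aim to conclude $x \in aR$.

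The first step exploits the maximality of $\mathfrak m$. Since $y \notin \mathfrak m$, the ideal $\mathfrak m + yR$ strictly contains the maximal ideal $\mathfrak m$, and hence equals $R$. Thus $1 = m + yz$ for some $m \in \mathfrak m$ and $z \in R$. Multiplying this identity by $x$ gives $x = xm + (xy)z$, and since $xy \in aR$ we obtain $x \equiv xm \pmod{aR}$.

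The key step is then a bootstrapping argument that absorbs powers of $m$ into $aR$. For each $j \ge 0$ one has $xm^j(1-m) = xm^j\cdot yz = (xy)(m^j z) \in aR$, so $xm^j \equiv xm^{j+1} \pmod{aR}$. Chaining these congruences yields $x \equiv xm \equiv xm^2 \equiv \cdots \equiv xm^k \pmod{aR}$ for every $k \ge 1$. Finally, because $m \in \mathfrak m = \sqrt{aR}$, there exists some $k$ with $m^k \in aR$, whence $xm^k \in aR$ and therefore $x \in aR$, as desired.

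The main obstacle is bridging from the set-theoretic fact that $y$ lies outside the radical to the multiplicative conclusion $x \in aR$: the maximality hypothesis is exactly what makes $y$ invertible modulo $\mathfrak m$, giving the crucial decomposition $1 = m + yz$, and the radical property of $\mathfrak m$ is what lets the ``nilpotent part'' $m$ be eliminated after finitely many steps. I note that this argument never uses that the ideal is principal, so it in fact shows that any ideal with maximal radical is primary. An alternative, more conceptual route would be to pass to $R/aR$ and observe that its nilradical $\mathfrak m/aR$ is maximal, hence the unique prime ideal; consequently every element of $R/aR$ is either a unit or nilpotent, so every zero-divisor is nilpotent, which is precisely the assertion that $aR$ is primary.
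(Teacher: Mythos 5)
Your proof is correct, and it even comes with a valid conceptual variant (the observation that in $R/aR$ the nilradical is the unique prime, so every element is a unit or nilpotent). It differs from the paper's argument in one instructive way. You assume $y \notin \mathfrak m = \sqrt{aR}$ and write $1 = m + yz$ with $m \in \mathfrak m$; since $m$ lies only in the radical, you must then run the telescoping argument $xm^j \equiv xm^{j+1} \pmod{aR}$ and invoke $m^k \in aR$ to finish. The paper instead assumes $x \notin \sqrt{aR}$ and considers the ideal $aR + xR$: its radical properly contains the maximal ideal $\sqrt{aR}$, hence equals $R$, hence $aR + xR = R$ and $1 = as + xt$ with $a$ itself (not merely an element of the radical) appearing in the decomposition. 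Multiplying by $y$ then gives $y = a(ys) + (xy)t \in aR$ in a single step, with no iteration needed. Both proofs exploit maximality in the same essential way and neither uses that the ideal is principal; yours costs an extra bootstrapping step but makes the role of the radical condition very explicit, while the paper's choice of which ideal to enlarge makes the conclusion immediate.
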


\begin{proof}
Let $x, y \in R$ be such that $xy \in aR$ but $x \not\in \sqrt{aR}$.
Note that $\sqrt{aR} \subsetneq \sqrt{aR + xR}$; so $aR + xR = \sqrt{aR + xR} = R$ because
$\sqrt{aR}$ is a maximal ideal. Hence  $1 = as + xt$ for
some $s,t \in R$. Thus $y = y(as+xt) = a(ys) + (xy)t \in aR$.
\end{proof}

 \begin{corollary} \label{primary element}
 If $D$ is a PID, then every irreducible element of $R(D)$ is primary. In particular,
each nonzero nonunit of $R(D)$ can be written as a finite product of
 primary elements.
 \end{corollary}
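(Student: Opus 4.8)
The plan is to reduce everything to Lemma~\ref{lemma4} via the classification of irreducibles in Theorem~\ref{irreducible}, together with an analysis of the prime ideals of $R(D)$ through the norm map. The key structural observation I would establish first is that $\nr \colon R(D) \to D$ is a surjective ring homomorphism whose kernel $I R(D) = \{\begin{pmatrix} 0 & b \\ 0 & 0 \end{pmatrix} : b \in D\}$ is precisely the nilradical of $R(D)$: indeed $\begin{pmatrix} a & b \\ 0 & a \end{pmatrix}^k = \begin{pmatrix} a^k & k a^{k-1} b \\ 0 & a^k \end{pmatrix}$, so this matrix is nilpotent exactly when $a = 0$. Consequently every prime ideal of $R(D)$ contains $I R(D)$, and since $R(D)/I R(D) \cong D$, the assignment $P \mapsto \nr(P)$ is a bijection between the prime ideals of $R(D)$ and those of $D$, under which $\alpha \in P$ if and only if $\nr(\alpha) \in \nr(P)$.

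Now let $\alpha = \begin{pmatrix} a & b \\ 0 & a \end{pmatrix}$ be irreducible; by Theorem~\ref{irreducible} there are two situations. If $a = 0$ and $b \in U(D)$ (case (i)), then $\alpha R(D) = I R(D)$, which is prime by Corollary~\ref{prime element}, and in particular primary, so there is nothing more to do. In the remaining cases (ii) and (iii) we have $\nr(\alpha) = u p^n$ for some prime $p \in D$, unit $u \in U(D)$, and $n \ge 1$ (the $\GCD$ hypothesis playing no role for this part). Here I would compute the radical of $\alpha R(D)$ from the standard identity $\sqrt{\alpha R(D)} = \bigcap \{\, P : P \text{ prime}, \ \alpha R(D) \subseteq P \,\}$. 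By the correspondence above, a prime $P$ contains $\alpha$ if and only if the corresponding prime $\mathfrak p = \nr(P)$ of $D$ contains $u p^n$, i.e.\ if and only if $p \in \mathfrak p$. Since $D$ is a PID, $pD$ is a maximal ideal and hence the unique prime of $D$ containing $p$; thus $M := \nr^{-1}(pD)$ is the unique prime of $R(D)$ containing $\alpha R(D)$, and $M$ is maximal. Therefore $\sqrt{\alpha R(D)} = M$ is maximal, and Lemma~\ref{lemma4} yields that $\alpha R(D)$ is primary.

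For the ``in particular'' statement I would simply invoke atomicity: $R(D)$ is noetherian, hence atomic, so every nonzero nonunit is a finite product of irreducibles (one may also read off an explicit such factorization from Corollary~\ref{corollary1}), and by the above each factor is a primary element.

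I expect the main obstacle to be the content of the first paragraph, namely pinning down that $\ker \nr$ is exactly the nilradical and hence that the primes of $R(D)$ are in norm-preserving bijection with the primes of $D$. Once that is in place, the identification $\sqrt{\alpha R(D)} = M$ and the appeal to Lemma~\ref{lemma4} are routine. A minor point worth checking carefully is that the $\GCD$ condition of case (iii) is never used in the argument for primary-ness; since the proof only uses that the norm is an associate of a prime power, the same conclusion in fact holds for \emph{every} element of prime-power norm, not merely the irreducible ones.
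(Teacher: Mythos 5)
Your proof is correct, and its skeleton coincides with the paper's: case (i) is handled by Corollary~\ref{prime element}, and cases (ii)--(iii) are reduced to Lemma~\ref{lemma4} by showing that $\sqrt{\alpha R(D)}$ is maximal. Where you differ is in \emph{how} that maximality is established. The paper argues by hand inside $R(D)$: it notes that every matrix $\begin{pmatrix} 0 & d \\ 0 & 0 \end{pmatrix}$ is nilpotent and hence lies in the radical, deduces that $\begin{pmatrix} p & 0 \\ 0 & p\end{pmatrix} \in \sqrt{\alpha R(D)}$, and then uses a B\'ezout identity $xz_1+pz_2=1$ to exhibit the identity matrix inside $\beta R(D) + \sqrt{\alpha R(D)}$ for any $\beta$ outside the radical. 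You instead identify $\ker\nr$ with the nilradical, obtain the norm-preserving bijection $\operatorname{Spec} R(D) \leftrightarrow \operatorname{Spec} D$, and read off that $\nr^{-1}(pD)$ is the unique (and maximal) prime over $\alpha R(D)$, so that it equals the radical. Your route is more conceptual and makes transparent the observation you record at the end --- that the $\GCD$ condition in case (iii) is irrelevant, and indeed every element of prime-power norm generates a primary ideal; the paper's computation is more elementary and self-contained but obscures this (even though it, too, never uses $b$). Both arguments are complete; your only slightly compressed step, that $pD$ is the unique prime of $D$ containing $p$, follows immediately since primes over $pD$ correspond to primes of the field $D/pD$.
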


\begin{proof}
Let $\alpha = \left(\begin{array}{cc}
 a & b \\
 0 & a \\
 \end{array} \right) \in R(D)$ be irreducible.
 By Theorem \ref{irreducible}, there are three cases that we have to consider.

Case 1. $a =0$ and $b \in U(D)$. By Corollary \ref{prime element},
$\alpha$ is a prime, and hence a primary element.

Cases 2 and 3. $a = up^n$ for some prime element $p \in D$, $u \in U(D)$, and $n \in \bN$.
By Lemma \ref{lemma4}, it suffices to show that $\sqrt{\alpha R(D)}$ is a maximal ideal.
Let $\beta = \left(\begin{array}{cc}
 x & y \\
 0 & x \\
 \end{array} \right) \in R(D) \setminus \sqrt{\alpha R(D)}$. Note that if $\delta = \left(\begin{array}{cc}
 0 & d \\
 0 & 0 \\
 \end{array} \right) \in R(D)$, then $\delta^2 = 0$, and hence $\delta \in  \sqrt{\alpha R(D)}$.
 Hence $\left(\begin{array}{cc}
 x & 0 \\
 0 & x \\
 \end{array} \right) \not\in \sqrt{\alpha R(D)}$ and $\left(\begin{array}{cc}
 u p^n & 0 \\
 0 & u p^n \\
 \end{array} \right) \in \sqrt{\alpha R(D)}$.
 But then $\left(\begin{array}{cc}
 p & 0 \\
 0 & p \\
 \end{array} \right) \in \sqrt{\alpha R(D)}$.
 Note also that if $x \in pD$, then
 $x = px_1$ for some $x_1 \in D$, and so $\left(\begin{array}{cc}
 x & 0 \\
 0 & x \\
 \end{array} \right) = \left(\begin{array}{cc}
 p & 0 \\
 0 & p \\
 \end{array} \right)\left(\begin{array}{cc}
 x_1 & 0 \\
 0 & x_1 \\
 \end{array} \right) \in  \sqrt{\alpha R(D)}$, a contradiction.
 So $x \not\in pD$, and hence $xz_1 + pz_2 = 1$ for some $z_1, z_2 \in D$.
 Thus $\left(\begin{array}{cc}
 1 & 0 \\
 0 & 1 \\
 \end{array} \right) = \beta \cdot \left(\begin{array}{cc}
 z_1 & 0 \\
 0 & z_1 \\
 \end{array} \right) + \left(\begin{array}{cc}
 p & 0 \\
 0 & p \\
 \end{array} \right)\left(\begin{array}{cc}
 z_2 & 0 \\
 0 & z_2 \\
 \end{array} \right) + \left(\begin{array}{cc}
 0 & -yz_1 \\
 0 & 0 \\
 \end{array} \right) \in \beta R(D) + \sqrt{\alpha R(D)}$. Therefore
 $\sqrt{\alpha R(D)}$ is maximal.
\end{proof}

\begin{remark}
  In view of Corollary \ref{primary element}, Corollary \ref{coprime factorization} in fact corresponds to the (unique) primary decomposition of $\alpha R(D)$, as every prime ideal of $R(D)$, except for $0 (+) D$, is maximal (cf. \cite[Theorem 3.2]{aw09}).

  Associativity is a congruence relation on $(R(D)^\bullet, \cdot)$, and we denote by $R(D)^\bullet_{\text{red}}$ the corresponding quotient monoid. Corollary \ref{coprime factorization} may also be viewed as a monoid isomorphism $R(D)_{\text{red}}^\bullet \cong \coprod_{p} R(D_{(p)})_{\text{red}}^\bullet$, where the coproduct is taken over all associativity classes of prime elements $p$ of $D$, and $D_{(p)}$ is the localization at $pD$.
\end{remark}

\section{The sets of lengths in $R(D)$ when $D$ is a PID}

Let $D$ be an integral domain and $R(D)= \{\left(\begin{array}{cc}
 a & b \\
 0 & a \\
 \end{array}
 \right) \mid a, b \in D\}$.
In this section, we characterize the sets of lengths in $R(D)$ when $D$ is a PID.

\begin{lemma} \label{lengths sum}
  Let $D$ be a PID and $\alpha, \beta \in R(D)$.
  \begin{enumerate}
    \item If $\alpha \beta \ne 0$, then $\sL(\alpha) + \sL(\beta) \subset \sL(\alpha \beta)$.

    \item If $\nr(\alpha)$ and $\nr(\beta)$ are coprime, then $\sL(\alpha) + \sL(\beta) = \sL(\alpha \beta)$.
  \end{enumerate}
\end{lemma}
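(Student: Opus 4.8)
The plan is to prove both statements by working with the additive structure of sets of lengths, treating the two parts separately but using part (1) as a stepping stone for the nontrivial inclusion in part (2).

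For part (1), the idea is straightforward concatenation of factorizations. If $k \in \sL(\alpha)$ and $\ell \in \sL(\beta)$, then I would write $\alpha = u_1 \cdots u_k$ and $\beta = v_1 \cdots v_\ell$ as products of irreducibles. Since $\alpha\beta \ne 0$, the product $u_1 \cdots u_k v_1 \cdots v_\ell$ is a factorization of $\alpha\beta$ into $k+\ell$ irreducibles, giving $k + \ell \in \sL(\alpha\beta)$. The only subtlety is ensuring this juxtaposition is genuinely a factorization of the nonzero element $\alpha\beta$, which holds precisely because $\alpha\beta \ne 0$; the irreducibles remain irreducible and none collapses. This establishes $\sL(\alpha) + \sL(\beta) \subset \sL(\alpha\beta)$.

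For part (2), coprimality of $\nr(\alpha)$ and $\nr(\beta)$ forces $\nr(\alpha\beta) \ne 0$, so neither factor is a zero-divisor and part (1) already gives one inclusion. The reverse inclusion $\sL(\alpha\beta) \subset \sL(\alpha) + \sL(\beta)$ is the heart of the matter. Here I would use Corollary \ref{coprime factorization}: since $\nr(\alpha)$ and $\nr(\beta)$ are coprime, the prime-power components of $\nr(\alpha\beta)$ split cleanly into those dividing $\nr(\alpha)$ and those dividing $\nr(\beta)$. Given any factorization $\alpha\beta = w_1 \cdots w_m$ into irreducibles, each irreducible $w_i$ has prime-power norm (by Theorem \ref{irreducible}, an irreducible has norm that is $0$, a prime, or a prime power $up^n$). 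I would sort the factors $w_i$ according to whether their norm's underlying prime divides $\nr(\alpha)$ or $\nr(\beta)$; coprimality guarantees these two sets are disjoint and exhaustive. Grouping accordingly yields $\alpha\beta = (\prod w_i)(\prod w_j)$ where the first product has norm a unit times the $\alpha$-part and the second has norm a unit times the $\beta$-part. By the uniqueness in Corollary \ref{coprime factorization}, these two partial products are associated to $\alpha$ and $\beta$ respectively, so their factorization lengths land in $\sL(\alpha)$ and $\sL(\beta)$, giving $m \in \sL(\alpha) + \sL(\beta)$.

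The main obstacle I anticipate is the bookkeeping in the reverse inclusion of part (2): one must confirm that reassembling the sorted irreducibles really produces factors associated to $\alpha$ and $\beta$, rather than merely factors whose norms have the right prime support. This is exactly where the uniqueness clause of Corollary \ref{coprime factorization} does the work, since it promotes an agreement of norms up to associates into an agreement of the matrix factors up to associates. A secondary point to handle carefully is the possibility that $\nr(\alpha)$ or $\nr(\beta)$ is a unit (so that $\alpha$ or $\beta$ is a unit of $R(D)$); in that degenerate case the corresponding set of lengths is $\{\,0\,\}$ and the Minkowski sum identity reduces to $\sL(\alpha\beta) = \sL(\alpha\beta)$ trivially, so I would dispose of it as a remark before the main argument.
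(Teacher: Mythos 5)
Your proposal is correct and takes essentially the same route as the paper: part (1) by concatenating factorizations, and part (2) by sorting the irreducible factors of a given factorization of $\alpha\beta$ according to the prime support of their norms and then using the uniqueness statement of Lemma \ref{coprime} (equivalently, Corollary \ref{coprime factorization}) to conclude that the two grouped partial products are associated to $\alpha$ and $\beta$. The only nitpick is your claim that coprimality forces $\nr(\alpha\beta)\ne 0$ --- if one norm is $0$ the other must be a unit, so this is covered by the degenerate case you already dispose of, and the argument stands.
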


\begin{proof}
  (1) Clear.

  (2)
  Let $n \in \sL(\alpha \beta)$. Then there exist irreducible $\gamma_1,\ldots,\gamma_n \in R(D)^\bullet$ such that $\alpha \beta = \gamma_1 \cdot\ldots\cdot \gamma_n$.
  Then also $\nr(\alpha)\nr(\beta) = \nr(\gamma_1)\cdot\ldots\cdot \nr(\gamma_n)$.
  Since $1 \in \GCD(\nr(\alpha), \nr(\beta))$, we may without loss of generality assume $\nr(\alpha) \simeq \nr(\gamma_1)\cdot\ldots\cdot \nr(\gamma_k)$ and $\nr(\beta) \simeq \nr(\gamma_{k+1})\cdot\ldots\cdot\nr(\gamma_n)$ for some $k \in [0,n]$. By Lemma \ref{coprime}, therefore $\alpha \simeq \gamma_1 \cdot\ldots\cdot \gamma_k$ and $\beta \simeq \gamma_{k+1}\cdot\ldots\cdot \gamma_n$, and $n = k + (n -k) \in \sL(\alpha) + \sL(\beta)$.
\end{proof}

For a prime element $p \in D$ we denote by $\val_p\colon D \to \bN_0 \cup \{\,\infty\,\}$ the corresponding valuation, i.e., $\val_p(0)=\infty$ and $\val_p(ap^k) = k$ if $k \in \bN_0$ and $a \in D^\bullet$ is coprime to $p$.
\begin{theorem} \label{lengths}
  Let $D$ be a PID, $\alpha \in R(D)$ and suppose $\alpha = \begin{pmatrix} a & b \\ 0 & a \end{pmatrix}$ with $a,b \in D$.

  \begin{enumerate}
    \item If $a = 0$, and $b = p_1^{e_1} \cdot\ldots\cdot p_n^{e_n}$ with pairwise non-associated prime elements $p_1,\ldots,p_n \in D$ and $e_1,\ldots,e_n \in \bN$, then $\sL(\alpha) = [1+n, 1+e_1+\ldots+e_n]$.

    \item
      Let $p \in D$ be a prime element, $n \in \bN$ and suppose $a = p^n$ and $\val_p(b) = k \in \bN_0 \cup \{\, \infty \,\}$.
      Then $\sL(\alpha) = \{\, 1 \,\}$ if and only if $k = 0$ or $n=1$. If $k \ge n-1$, then
      \[
        [3, n-2] \cup \{\, n \,\} \subset \;\sL(\alpha)\; \subset [2, n-2] \cup \{\, n \,\},
      \]
      and if $k \in [1,n-2]$, then
      \[
        [3, k+1] \subset \;\sL(\alpha)\; \subset [2, k+1].
      \]
      Moreover, if $k \ge 1$, then $2 \in \sL(\alpha)$ if and only if $n$ is even or $k < \f{n}{2}$.
  \end{enumerate}
\end{theorem}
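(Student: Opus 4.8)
The plan is to derive everything from the explicit parametrization of factorizations in Lemma \ref{lemma:fact}, reading off the length of each factorization and then determining which lengths are compatible with the given data: the factorization type of $b$ in part (1), and the pair $(n,k)$ in part (2).

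For part (1) the element $\alpha = \begin{pmatrix} 0 & b \\ 0 & 0 \end{pmatrix}$ is a zero-divisor, and by Lemma \ref{lemma:fact}(\ref{fact:zd}) every factorization into irreducibles consists of one factor associated to $\begin{pmatrix} 0 & 1 \\ 0 & 0 \end{pmatrix}$ together with a writing $b = q_1 \cdots q_m$ as a product of prime powers; its length is $1+m$. Since $b = p_1^{e_1}\cdots p_n^{e_n}$, the number $m$ ranges exactly over $[n,\, e_1+\cdots+e_n]$: the minimum $n$ is forced because each $q_i$ is a power of a single prime while all $n$ primes must occur, the maximum $\sum e_i$ is attained by taking each $q_i$ to be a single prime, and every intermediate value is reached by splitting one prime power further. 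The irreducibility constraint on each factor (namely $q_i$ prime or $1 \in \GCD(q_i,a_i)$) can always be met by a suitable choice of the free entries $a_i$, so $\sL(\alpha) = [1+n,\, 1+e_1+\cdots+e_n]$.

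For part (2) I would set up the dictionary provided by Lemma \ref{lemma:fact}(\ref{fact:nzd}): a factorization corresponds to a partition of $n$ into the norm-exponents, with multiplicities $m_l$, the length being the number of parts $L = \sum_l m_l$, and the off-diagonal data constrained by $b = \sum_l p^{n-l} S_l$, where $S_l$ is a sum of $m_l$ elements coprime to $p$ for $l \geq 2$ and $S_1$ is free. The single most useful estimate is that every summand has $p$-adic valuation at least $n-M$, with $M$ the largest part, whence
\[
  k = \val_p(b) \geq n - M, \qquad \text{equivalently} \qquad M \geq n-k .
\]
That $\sL(\alpha) = \{\,1\,\}$ iff $k=0$ or $n=1$ is immediate from Theorem \ref{irreducible}. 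For $k \leq n-2$ the partition must contain a part $\geq 2$, so the remaining parts sum to $n-M \leq k$ and $L \leq 1+(n-M) \leq k+1$, giving the upper bound $\sL(\alpha) \subset [2,k+1]$. For $k \geq n-1$ the all-ones partition yields the length $n$, while the unique partition into $n-1$ parts, namely one $2$ and $n-2$ ones, forces $\val_p(b)=n-2$ and is therefore excluded, producing the gap in $[2,n-2]\cup\{\,n\,\}$. The matching lower bounds I would obtain by explicit constructions: the length $k+1$ from the partition with one part $n-k$ and $k$ parts equal to $1$, and intermediate lengths by inserting pairs of $2$'s whose unit entries are chosen to cancel modulo $p$, thereby raising $\val_p$ to the prescribed value.

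The hard part will be the final clause, deciding exactly when $2 \in \sL(\alpha)$. A length-two factorization corresponds to a two-part partition $(s,n-s)$ with $b = p^s c_2 + p^{\,n-s} c_1$; the cases $s=1$ and $2 \leq s < n-s$ each pin $k$ down to a single value ($k=1$, resp.\ $k=s$), and together these realize precisely the range $k < \f{n}{2}$, whereas the balanced case $s = n-s = \f{n}{2}$ (possible only for $n$ even) gives $b = p^{\,n/2}(c_1+c_2)$ and covers $k \geq \f{n}{2}$. The genuine obstacle is this balanced case at the boundary $k=\f{n}{2}$: realizing it amounts to writing the unit $b/p^{\,n/2}$ as a sum $c_1+c_2$ of two elements coprime to $p$. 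This sum-of-two-units question is where the combinatorics becomes delicate — it is automatic once $k-\f{n}{2}>0$, and a residue-field obstruction can only arise at $k=\f{n}{2}$ — and it is exactly what makes the criterion ``$n$ even or $k<\f{n}{2}$'' sharp. I expect the careful bookkeeping of these sums of units modulo $p$, rather than any single hard idea, to be the crux of the argument.
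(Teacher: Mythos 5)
Your part (1) and the upper bounds in part (2) are correct and essentially the paper's own argument: the valuation estimate $k=\val_p(b)\ge n-M$ (with $M$ the largest norm-exponent occurring) is exactly the paper's Claim A, and your exclusion of length $n-1$ when $k\ge n-1$ is its Claim C. The gaps are in the realizability statements. For $[3,k+1]\subset\sL(\alpha)$ and $[3,n-2]\subset\sL(\alpha)$ you offer only ``explicit constructions \ldots inserting pairs of $2$'s whose unit entries cancel modulo $p$.'' This is not carried out, and it is not routine: your partition with one part $n-k$ and $k$ ones produces only the maximal length $k+1$, and for a shorter length $t\le k$ a single large part forces its off-diagonal entry to be $\equiv 0 \bmod p$, so one genuinely needs several parts $\ge 2$ with controlled cancellation. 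The paper instead proves these claims by induction on $n$, peeling off the irreducible $\begin{pmatrix} p & 0 \\ 0 & p \end{pmatrix}$ to pass from $(n,k)$ to $(n-1,k-1)$ and anchoring the induction with an ad hoc argument that $3\in\sL(\alpha)$; you would need either that induction or a full cancellation bookkeeping.

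The decisive gap is the ``Moreover'' clause, and here you have in fact located a genuine flaw rather than a solvable difficulty. As you observe, for $n$ even and $k\ge n/2$ the only admissible partition is the balanced one, and at $k=n/2$ realizing it requires writing the unit $u=b/p^{n/2}$ as a sum of two elements coprime to $p$. You ``expect'' the bookkeeping to confirm the stated criterion; it does not. When $D/pD\cong\mathbb F_2$ every sum of two units is $\equiv 0\bmod p$, so no such decomposition of $u$ exists: for example $\begin{pmatrix} 16 & 4 \\ 0 & 16 \end{pmatrix}$ over $\bZ$ (so $p=2$, $n=4$, $k=2$) has no factorization into two irreducibles, since the partitions $(1,3)$ and $(2,2)$ force $\val_2(b)=1$ and $\val_2(b)\ge 3$ respectively. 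The paper's own proof of this claim (its Claim B) exhibits the entry $up^{k-n/2}-1$, which equals $u-1$ at $k=n/2$ and need not be coprime to $p$; the same residue-field obstruction then propagates into the inductive proofs of the lower bounds (e.g. $3\notin\sL(\alpha)$ for $p=2$, $n=5$, $k=3$, contradicting $[3,k+1]\subset\sL(\alpha)$). So your proposal is incomplete exactly where you predicted it would be delicate, and in the case $\card{D/pD}=2$ no bookkeeping can close it, because the statement being proved fails there.
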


\begin{proof}
  (1) This is clear from Lemma \ref{lemma:fact}(\ref{fact:zd}), as every factorization of $b$ into prime powers gives a factorization of $\alpha$ (choose $a_i = 1$), and conversely.

  (2) The cases $k=0$ and $n=1$ are clear from Theorem \ref{irreducible}, so from now on we assume $k \ge 1$ and $n > 1$.
      Let $b = up^k$ with $u \in D$ and $1 \in \GCD(u, p)$.
      We repeatedly make use of Lemma \ref{lemma:fact}(\ref{fact:nzd}), and the notation used there to describe a factorization, without explicitly mentioning this fact every time.

      \smallskip
      \textbf{Claim A:} $\sL(\alpha) \subset [2, \min\{\, k+1,\, n \,\}]$.

      \vspace{.18cm}
      \noindent{\emph{Proof.}}
        Because $\alpha$ is not an atom, $1 \not \in \sL(\alpha)$. Any factorization of $\alpha$ is associated to one in Lemma \ref{lemma:fact}(\ref{fact:nzd}); we fix a factorization of $\alpha$ with notation as in the lemma.
        The length of the factorization is then $t = \sum_{l=1}^n m_l$.
        Since $\sum_{l=1}^n m_l l = n$, clearly $t \le n$. Moreover, necessarily $m_l = 0$ for all $l > n - (t - 1)$. Since $b = \sum_{l=1}^n p^{n-l} (\sum_{j = 1}^{m_l} a_{l,j})$, therefore $k = \val_p(b) \ge \val_p(p^{n - (n - t + 1)}) = t-1$, i.e., $t \le k+1$.
      \vspace{.25cm}

      \textbf{Claim B:} $2 \in \sL(\alpha)$ if and only if $n$ is even or $k < \frac{n}{2}$.

      \vspace{.18cm}
      \noindent{\emph{Proof.}}
      Suppose $2 \in \sL(\alpha)$ and $n$ is odd. Then $n = l + (n -l)$ and $b = p^{n-l} a_{l,1} + p^{l} a_{n-l,1}$ with $1 \in \GCD(a_{l,1}, p)$ and $1 \in \GCD(a_{n-l,1}, p)$. Since $n$ is odd, then $n-l \ne l$ and therefore $k = \val_p(b) = \min\{\, n-l,\, l \,\} < \f{n}{2}$.

        For the converse suppose first $1 \le k < \f{n}{2}$. Then $n = k + (n-k)$, $n-k > k$ and $b = p^{n-k} \cdot 1 + p^{k} (u - p^{n-2k})$ with $1 \in \GCD(u - p^{n-2k}, p)$. If $n$ is even and $k \ge \f{n}{2}$, then $n = \f{n}{2} + \f{n}{2}$ and $b = p^{\f{n}{2}} ( 1 + (up^{k-\f{n}{2}} - 1))$ with $1 \in \GCD(up^{k-\f{n}{2}} - 1, p)$.
      \vspace{.25cm}

      \textbf{Claim C:} If $n-1 \in \sL(\alpha)$, then $k = n-2$.

      \vspace{.18cm}
      \noindent{\emph{Proof.}}
        For a corresponding factorization we must have $m_1 = n-2$, $m_2 = 1$, and $m_l = 0$ for all $l > 2$.
        Then $b = p^{n-1}(a_{1,1} + \ldots + a_{1,n-2}) + p^{n-2} a_{2,1}$ with $1 \in \GCD(a_{2,1},p)$, whence $k = \val_p(b) = n-2$.
      \vspace{.25cm}

      \textbf{Claim D:}
      Let $n \ge 3$ and $k \ge 2$.
      If either $k=2$ or $n \ne 4$, then $3 \in \sL(\alpha)$.

      \vspace{.18cm}
      \noindent{\emph{Proof.}}
        Suppose first that $n$ is odd and set $b' = b/p$. Then
        \begin{equation} \label{eq:ind}
          \alpha = \begin{pmatrix} p & 0 \\ 0 & p \end{pmatrix} \alpha' \quad \text{with}\quad \alpha'=\begin{pmatrix} p^{n-1} & b' \\ 0 & p^{n-1} \end{pmatrix},
        \end{equation}
        and, by Claim B, $2 \in \sL(\alpha')$. Therefore $3 \in \sL(\alpha)$.

        If $n$ is even, $n \ge 6$, and $k \ge 3$, then
        \[
          \alpha = \begin{pmatrix} p^2 & u \\ 0 & p^2 \end{pmatrix} \begin{pmatrix} p^{n-2} & u (p^{k-2} - p^{n-4}) \\ 0 & p^{n-2} \end{pmatrix},
        \]
        where the first factor is irreducible and the second has a factorization of length $2$ by Claim B.

        If $k = 2$, then
        \[
          \alpha = \begin{pmatrix} p & 0 \\ 0 & p \end{pmatrix}^2 \begin{pmatrix} p^{n-2} & u \\ 0 & p^{n-2} \end{pmatrix}
        \]
        is a factorization of length $3$.
      \vspace{.25cm}

      \textbf{Claim E:} If $k \ge n-1$, then $n \in \sL(\alpha)$.

      \vspace{.18cm}
      \noindent{\emph{Proof.}}
        We use Lemma \ref{lemma:fact}(\ref{fact:nzd}). Set $m_1 = n$, $a_{1,1} = up^{k-(n-1)}$ and $a_{1,2} = \ldots = a_{l,n} = 0$. Then $p^{n-1} (up^{k-(n-1)} + 0 + \ldots + 0) = b$.
      \vspace{.25cm}

      \textbf{Claim F:} If $k \in [1,n-2]$, then $[3,k+1] \subset \sL(\alpha)$.

      \vspace{.18cm}
      \noindent{\emph{Proof.}}
        If $n \le 3$ or $k = 1$, then the claim is trivially true, so we may assume $k \ge 2$.  We proceed by induction on $n$.
        Suppose $n \ge 4$, and that the claim is true for $n-1$.

        Let $b' = b/p$ and let $\alpha'$ be as in \eqref{eq:ind}. We have $\val_p(b') = k-1 \ge 1$.

        If $k = 2$, then $1= k-1 < \f{n-1}{2}$, and hence $2 \in \sL(\alpha')$ (by Claim B). Therefore $\{\, 3 \,\} = [3,k+1] \subset \{\,1\,\} + \sL(\alpha') \subset \sL(\alpha)$.

        If $k \ge 3$, then by induction hypothesis, $[3,k] \subset \sL(\alpha')$, and thus $[4,k+1] = \{\, 1 \,\} + \sL(\alpha') \subset \sL(\alpha)$, and by Claim D, also $3 \in \sL(\alpha)$.
      \vspace{.25cm}

      \textbf{Claim G:} If $k \ge n-1$, then $[3,n-2] \subset \sL(\alpha)$.

      \vspace{.18cm}
      \noindent{\emph{Proof.}}
        If $n \le 4$, then the claim is trivially true.
        We again proceed by induction on $n$.
        Suppose $n \ge 5$ (then $k \ge 4$), and that the claim is true for $n-1$.

        Let $b' = b/p$ and let $\alpha'$ be as in \eqref{eq:ind}.
        Again, $\val_p(b') = k-1 \ge 3$ and by induction hypothesis $[3,n-3] \subset \sL(\alpha')$. Therefore $[4,n-2] \subset \sL(\alpha)$ and by Claim D also $3 \in \sL(\alpha)$.
      \vspace{.25cm}

      If $k \ge n-1$, then the claim of the theorem follows from claims A, B, C, E and G. If $k \in [2,n-2]$, then the claim of the theorem follows from claims A, B and F.
\end{proof}

If $\alpha \in R(D)$ is a nonzero nonunit, and $\sL(\alpha) = \{\, l_1, l_2, \ldots, l_k \,\}$, then the set of distances of $\alpha$ is defined as $\Delta(\alpha) = \{\, l_i - l_{i-1} \mid i \in [2,k] \,\}$, and $\Delta\big(R(D)\big) = \bigcup_{\alpha \in R(D) \setminus \big(\{\,0\,\} \cup U(R(D))\big)} \Delta(\alpha)$. For $k \in \bN_{\ge 2}$, set $\cU_k\big(R(D)\big) = \bigcup_{\alpha \in R(D), k \in \sL(\alpha)} \sL(\alpha)$.

\begin{corollary} \label{elasticity}
  If $D$ is a PID, but not a field, then $\cU_2\big(R(D)\big) = \bN_{\ge_2}$ and $\Delta\big(R(D)\big) = \{\, 1, 2 \,\}$.
\end{corollary}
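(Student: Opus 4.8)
The plan is to read off both invariants directly from Theorem \ref{lengths} together with the definitions, after fixing a prime element $p \in D$, which exists because $D$ is not a field.

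First I would settle $\cU_2\big(R(D)\big) = \bN_{\ge 2}$. For the inclusion ``$\subseteq$'', observe that if $2 \in \sL(\alpha)$ then $\alpha$ is a nonzero nonunit which is not irreducible, so $1 \notin \sL(\alpha)$; hence every set of lengths meeting $2$ is contained in $\bN_{\ge 2}$, and so is the union defining $\cU_2$. For the reverse inclusion I would use the zero-divisors $\alpha_e = \begin{pmatrix} 0 & p^e \\ 0 & 0 \end{pmatrix}$ for $e \in \bN$: by Theorem \ref{lengths}(1) (with $n=1$, $e_1 = e$) we have $\sL(\alpha_e) = [2, e+1]$, which contains $2$, and letting $e$ range over $\bN$ shows $\bN_{\ge 2} \subseteq \cU_2\big(R(D)\big)$.

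Next I would prove $\Delta\big(R(D)\big) \subseteq \{\,1,2\,\}$ by inspecting the possible shapes of $\sL(\alpha)$ in Theorem \ref{lengths}. In case (1) the set of lengths is an interval, contributing only the distance $1$. In case (2) with $\val_p(b) = k \in [1,n-2]$, the bounds $[3,k+1] \subseteq \sL(\alpha) \subseteq [2,k+1]$ force $\sL(\alpha)$ to be one of the intervals $[3,k+1]$ or $[2,k+1]$, again yielding only the distance $1$. The only case producing a larger gap is case (2) with $k \ge n-1$: there $\sL(\alpha)$ equals $[3,n-2]\cup\{\,n\,\}$ or $[2,n-2]\cup\{\,n\,\}$ (the two possibilities distinguished by the ``moreover'' clause, noting that $k \ge n-1 \ge n/2$ for $n \ge 2$ rules out $k < n/2$). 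In this regime the successive distances are $1$ within the interval part and $2$ for the final jump to $n$ (for small $n$ the interval part degenerates to a point or is empty, leaving only the distance $2$ or no distance at all). Hence $\Delta(\alpha) \subseteq \{\,1,2\,\}$ in every case.

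Finally, for $\{\,1,2\,\} \subseteq \Delta\big(R(D)\big)$ I would exhibit witnesses: the zero-divisor with $b = p^2$ has $\sL = \{\,2,3\,\}$, giving $1 \in \Delta\big(R(D)\big)$, while $\alpha = \begin{pmatrix} p^4 & p^3 \\ 0 & p^4 \end{pmatrix}$ has $n = 4$ and $k = 3 \ge n-1$ with $n$ even, so Theorem \ref{lengths}(2) gives $\sL(\alpha) = \{\,2,4\,\}$ and thus $2 \in \Delta\big(R(D)\big)$. The main work is the case analysis of the third paragraph: one must confirm that outside the regime $k \ge n-1$ the sets of lengths are genuine intervals, and that inside it the set of lengths is precisely an interval together with the single isolated top value $n$, so that the only distances occurring are $1$ and $2$.
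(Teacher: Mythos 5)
Your argument for $\cU_2\big(R(D)\big) = \bN_{\ge 2}$ and your two witnesses ($\sL = \{2,3\}$ for the zero-divisor with $b=p^2$, and $\sL = \{2,4\}$ for $n=4$, $k=3$) are correct, and in spirit this is exactly the paper's proof, which consists of the single sentence that the corollary follows from Theorem \ref{lengths}. However, your case analysis for the inclusion $\Delta\big(R(D)\big) \subseteq \{1,2\}$ has a genuine gap: Theorem \ref{lengths} only describes $\sL(\alpha)$ when $\nr(\alpha)=0$ or when $\nr(\alpha)$ is (up to a unit) a prime power. You never treat a non-zero-divisor $\alpha$ whose norm is divisible by two or more non-associated primes, e.g.\ $\nr(\alpha)=p^4q^4$, and such elements are not covered by any of your three cases.

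The missing step is supplied by Corollary \ref{coprime factorization} together with Lemma \ref{lengths sum}(2): writing $\alpha \simeq \beta_1\cdots\beta_r$ with pairwise coprime prime-power norms gives $\sL(\alpha)=\sL(\beta_1)+\cdots+\sL(\beta_r)$, a sumset of sets of the shapes you listed. One then needs the (elementary but not automatic) observation that for finite nonempty $A,B\subset\bZ$ one has $\max\Delta(A+B)\le\max\{\max\Delta(A),\max\Delta(B)\}$: if $a_i+b_j\in A+B$ is not the maximum, then either $a_{i+1}+b_j$ or $a_i+b_{j+1}$ is an element of $A+B$ lying at most $\max\{\max\Delta(A),\max\Delta(B)\}$ above it. Since each summand has all gaps at most $2$, so does $\sL(\alpha)$, which closes the argument. (Note the sumsets can genuinely fail to be intervals --- $\{2,4\}+\{2,4\}=\{4,6,8\}$ --- so this case cannot simply be absorbed into the interval cases.) With that addition your proof is complete.
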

\begin{proof}
This follows directly from Theorem \ref{lengths}.
\end{proof}

\begin{corollary}
  Suppose $D$ is a PID that has infinitely many pairwise non-associated prime elements.
  Then
  \[
    \begin{split}
      \cL(R(D)) &= \Big \{\, \{\, 0 \,\}, \{\, 1 \,\} \Big\} \;\cup\; \Big\{\, [m,n] \mid m \in [2,n] \,\Big\} \\
      & \;\cup\; \Big\{\, [m,n] \cup \{\, n+2 \,\} \mid m \in [2,n] \text{ and $n$ even } \,\Big\} \\
      &\cup\; \Big\{\, [m,n] \cup \{\, n+2 \,\} \mid m \in [3,n] \text{ and $n$ odd } \,\Big\} \\
      &\cup\; \Big\{\, m + 2 [0,n] \mid \text{with } m \in \bN_{\ge 2n} \text{ and } n \in \bN\,\}.
    \end{split}
  \]
\end{corollary}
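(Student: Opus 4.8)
The plan is to compute $\cL(R(D))$ by splitting $R(D)\setminus\{0\}$ into three parts---units, zero-divisors, and non-zero-divisor nonunits---and to reduce the last (and only nontrivial) part to a combinatorial analysis of Minkowski sums of the ``local'' length sets supplied by Theorem \ref{lengths}. Units contribute exactly $\{0\}$. For a zero-divisor $\alpha = \begin{pmatrix} 0 & b \\ 0 & 0 \end{pmatrix}$ with $b = p_1^{e_1}\cdots p_r^{e_r}$, Theorem \ref{lengths}(1) gives $\sL(\alpha) = [1+r,\,1+e_1+\cdots+e_r]$. Letting $b$ range over all nonzero elements and using that $D$ has infinitely many pairwise non-associated primes, these sets are precisely $\{1\}$ (when $r=0$) together with every interval $[m,n]$ with $2\le m\le n$: the endpoints $m=1+r$ and $n=1+\sum e_i$ can be prescribed subject only to $\sum e_i\ge r$. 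This already realizes the first two families in the asserted list and shows no other length set comes from zero-divisors.

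For a non-zero-divisor nonunit $\alpha$, Corollary \ref{coprime factorization} factors $\alpha=\beta_1\cdots\beta_s$ with pairwise coprime prime-power norms, and Lemma \ref{lengths sum}(2) yields $\sL(\alpha)=\sL(\beta_1)+\cdots+\sL(\beta_s)$ as a Minkowski sum, where each $\sL(\beta_i)$ is one of the sets of Theorem \ref{lengths}(2). The first step is to normalize these blocks: translating each so its minimum is $0$, every block is either a full interval $[0,\ell]$ or a \emph{gapped} set $[0,s]\cup\{s+2\}$ with $s$ even, and the translation amount is $1$ for the irreducible block $\{1\}$, $2$ for an interval $[2,\cdot]$ or a $G$-block $[2,n-2]\cup\{n\}$ ($n$ even), and $3$ for an interval $[3,\cdot]$ or an $H$-block $[3,n-2]\cup\{n\}$ ($n$ odd). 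Conversely each such block (with admissible parameters) is realized by some $\beta_i$, and since infinitely many primes are available we may take the norms pairwise coprime, so \emph{every} finite Minkowski sum of admissible blocks occurs as some $\sL(\alpha)$.

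The heart of the argument is the classification of these Minkowski sums. I would show first that the sum contains $\max-1$ if and only if some block contains its own second-largest element, i.e.\ some block is an interval with at least two elements; hence the top element is ``detached'' exactly when all blocks are gapped or singletons. Combined with the fact that summing any interval of length $\ge 2$ fills a gap of size one, this gives a clean trichotomy: $\sL(\alpha)$ is either a full interval $[m,n]$ with $m\ge 2$; an interval with one detached point $[m,n]\cup\{n+2\}$; or an arithmetic progression $m+2[0,\nu]$ of difference $2$. A parity bookkeeping then pins down the ranges: since every gapped shape $[0,s]\cup\{s+2\}$ has $s$ even and every translation shifts both endpoints equally, in the detached case the span satisfies $n\equiv m\pmod 2$, with $m\ge 2$ when the detached point is even (forcing a $G$-block, $n$ even) and $m\ge 3$ when it is odd (an $H$-block, $n$ odd); tracking the even parameter $s$, the number of gapped blocks, and the translation count matches this against the third and fourth families, while in the progression case the same bookkeeping forces $m\ge 2\nu$.

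For the reverse inclusion I would exhibit explicit realizers: intervals via the zero-divisors above; $[m,n]\cup\{n+2\}$ from a single gapped block $G_{\,n-m+4}$ (resp.\ $H_{\,n-m+5}$) together with enough irreducible factors of prime norm to translate the minimum to $m$; and $m+2[0,\nu]$ from $\nu$ copies of $\{2,4\}=\sL\begin{pmatrix} p^4 & p^3 \\ 0 & p^4 \end{pmatrix}$ translated by $m-2\nu$ such irreducible factors. The main obstacle is precisely the Minkowski-sum bookkeeping in the third paragraph: one must keep exact track of which element just below the maximum survives and of the parity of the detached point, and it is this delicate accounting---rather than any single hard estimate---that determines the admissible ranges of $m,n$ for the interval-plus-point family and the inequality $m\ge 2\nu$ for the arithmetic-progression family.
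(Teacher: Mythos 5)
Your overall strategy coincides with the paper's: the proof in the paper consists precisely of your first two reduction steps (units and irreducibles give $\{\,0\,\}$ and $\{\,1\,\}$; zero-divisors give the discrete intervals via Theorem \ref{lengths}(1); Corollary \ref{coprime factorization} together with Lemma \ref{lengths sum}(2) reduces non-zero-divisor nonunits to arbitrary finite Minkowski sums of the blocks $\{\,1\,\}$, $[2,n]$, $[3,n]$, $[2,n]\cup\{\,n+2\,\}$ with $n$ even, and $[3,n]\cup\{\,n+2\,\}$ with $n$ odd), and the paper then stops, leaving the sumset computation to the reader. You attempt to carry that computation out, and most of it is right: the normalization, the trichotomy (interval / interval with one detached top point / arithmetic progression of difference $2$), the criterion ``$\max-1$ lies in the sum iff some block is an interval with at least two elements,'' and the bound $m\ge 2\nu$ for the progression family are all correct.

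The gap is the sentence claiming that your parity bookkeeping ``matches this against the third and fourth families.'' It does not. Your own (correct) computation shows that every gapped block normalizes to $[0,s]\cup\{\,s+2\,\}$ with $s$ \emph{even}, and that a sum of gapped blocks and singletons again normalizes to $[0,S]\cup\{\,S+2\,\}$ with $S$ even; hence every realizable detached set $[m,n]\cup\{\,n+2\,\}$ must satisfy $n\equiv m\pmod 2$. The third and fourth families in the statement constrain only the parity of $n$, not of $m$. For instance $m=3$, $n=4$ gives $[3,4]\cup\{\,6\,\}=\{\,3,4,6\,\}$, which belongs to the stated third family but is \emph{not} a set of lengths: since $5\notin\sL(\alpha)$, no block may be a nondegenerate interval, so all blocks are singletons or gapped; since $4=\min\sL(\alpha)+1$ must be attained, some gapped block must contain its own minimum plus one, forcing that block to be $[2,N-2]\cup\{\,N\,\}$ with $N\ge 6$ or $[3,N-2]\cup\{\,N\,\}$ with $N\ge 7$; the latter has maximum at least $7>6$, and the former forces $N=6$ with no further blocks, giving minimum $2\ne 3$. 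So your method, carried to completion, establishes the detached families only under the additional constraint $m\equiv n\pmod 2$ (equivalently: $m,n$ both even with $m\ge 2$, or both odd with $m\ge 3$), which is strictly smaller than what the corollary asserts. You must either locate an error in the parity argument (I do not see one) or explicitly record that the statement as printed is not what your argument proves; asserting a match papers over a real discrepancy that the published proof, being only a reduction, also does not resolve.
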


\begin{proof}
  The sets $\{\,0\,\}$ and $\{\,1\,\}$ correspond to units and irreducibles.
  For zero-divisors, the sets of lengths are discrete intervals and completely described in Theorem \ref{lengths}(1). By Corollary \ref{coprime factorization} and Lemma \ref{lengths sum}(2), the sets of lengths of nonunit non-zero-divisors are arbitrary sumsets of sets as in Theorem \ref{lengths}(2), i.e., of sets of the form $\{\, 1 \,\}$, $[2,n]$ (for $n \ge 2$), $[3, n]$ (for $n \ge 3$), $[2, n] \cup \{\,n+2\,\}$ for even $n \ge 2$, and $[3, n] \cup \{\,n+2\,\}$ for odd $n \ge 3$.
\end{proof}

Finally, we remark that other important invariants of factorization theory (their definitions readily generalize to the zero-divisor case) are easily determined for $R(D)$ using the characterization of sets of lengths and Corollary \ref{coprime factorization}.

\begin{corollary}
Let $D$ be a PID but not a field.
$R(D)$ is a locally tame ring with catenary degree $\mathsf c\big(R(D)\big) = 4$. In particular, $\Delta \big( R (D) \big) = [1, \mathsf c \big(R(D)\big)-2]$.
\end{corollary}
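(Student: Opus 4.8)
The plan is to establish the three assertions separately, reducing each to the local components at a single prime via Corollary \ref{coprime factorization}. Recall that the set of distances $\Delta\big(R(D)\big) = \{\, 1, 2 \,\}$ has already been computed in Corollary \ref{elasticity}; hence, once the catenary degree is shown to equal $4$, the final sentence is immediate, since $[1, \mathsf{c}(R(D)) - 2] = [1,2] = \{\, 1, 2 \,\} = \Delta\big(R(D)\big)$. The substantive work therefore lies in proving $\mathsf{c}(R(D)) = 4$ and local tameness.

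For the lower bound $\mathsf{c}(R(D)) \ge 4$ I would exhibit one bad element. Since $D$ is not a field, fix a prime $p \in D$ and set $\alpha = \begin{pmatrix} p^4 & 0 \\ 0 & p^4 \end{pmatrix}$. By Theorem \ref{lengths}(2) (with $n=4$ and $k = \infty \ge n-1$, $n$ even) we have $\sL(\alpha) = \{\, 2, 4 \,\}$. Using Lemma \ref{lemma:fact}(\ref{fact:nzd}) and the constraint $b = 0$, every factorization of length $4$ consists of four atoms of norm $p$, whereas every factorization of length $2$ consists of two atoms of norm $p^2$ (the alternative $m_1 = m_3 = 1$ is excluded, as it would force a non-coprime, hence reducible, norm-$p^3$ factor). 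Thus a length-$2$ and a length-$4$ factorization share no atom and lie at distance $4$. Because $\sL(\alpha)$ contains no value strictly between $2$ and $4$, any chain joining a length-$2$ to a length-$4$ factorization must contain a single step jumping between these lengths, and such a step has distance exactly $4$; hence $\mathsf{c}(\alpha) \ge 4$. (Alternatively, the general inequality $\max \Delta(R(D)) + 2 \le \mathsf{c}(R(D))$ together with Corollary \ref{elasticity} yields the same bound.)

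For the upper bound $\mathsf{c}(R(D)) \le 4$ I would first reduce to prime-power norm. By Corollary \ref{coprime factorization} every factorization of a non-zero-divisor $\alpha$ splits uniquely, up to order and associates, along the coprime prime-power components of $\nr(\alpha)$, and catenary chains can be run componentwise; thus $\mathsf{c}(\alpha) = \max_p \mathsf{c}(\alpha_{(p)})$, and it suffices to treat $\alpha = \begin{pmatrix} p^n & b \\ 0 & p^n \end{pmatrix}$ of prime-power norm together with the zero-divisor case. For a zero-divisor $\begin{pmatrix} 0 & b \\ 0 & 0 \end{pmatrix}$, Lemma \ref{lemma:fact}(\ref{fact:zd}) shows the off-diagonal entries are unconstrained, so the only moves are merging or splitting prime powers (distance $2$) and adjusting off-diagonals (distance $1$); any two factorizations are then joined by steps of distance at most $2$. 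For prime-power norm I would induct on $n$ via Lemma \ref{lemma:fact}(\ref{fact:nzd}): if two given factorizations each contain an atom of norm $p$, then after a distance-$\le 2$ adjustment each may be assumed to contain $\begin{pmatrix} p & 0 \\ 0 & p \end{pmatrix}$, which factors out to reduce to $n-1$; the remaining case, where a factorization uses only atoms of norm $\ge p^2$, is bridged by the explicit distance-$4$ trade $\begin{pmatrix} p^2 & * \\ 0 & p^2 \end{pmatrix}\begin{pmatrix} p^2 & * \\ 0 & p^2 \end{pmatrix} \leftrightarrow \begin{pmatrix} p & * \\ 0 & p \end{pmatrix}^4$ underlying the length gap in Theorem \ref{lengths}(2). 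I expect this last case — showing that no trade of distance exceeding $4$ is ever \emph{forced} — to be the main obstacle, requiring careful bookkeeping of how $\val_p(b)$ constrains which lengths, and hence which atom norms, can appear.

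Finally, for local tameness I would once more reduce to the local components. Fix an atom $u$ with $\nr(u) = p^l$ (or $u \simeq \begin{pmatrix} 0 & 1 \\ 0 & 0 \end{pmatrix}$). Given any $\alpha$ with $u \mid \alpha$ and any factorization $z$ of $\alpha$, only the $p$-primary component $\alpha_{(p)}$ is relevant, since by Corollary \ref{coprime factorization} the remainder of $z$ is unaffected. To produce a factorization containing $u$ one removes from the $p$-part of $z$ finitely many atoms whose norms absorb $p^l$ and rearranges the boundary; the number of atoms touched depends only on $l$, not on $\alpha$, and the off-diagonal freedom of Lemma \ref{lemma:fact} lets one match the product, giving a uniform bound $\mathsf{t}(R(D), u) < \infty$. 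Hence $R(D)$ is locally tame. Combining the lower and upper bounds gives $\mathsf{c}(R(D)) = 4$, and together with Corollary \ref{elasticity} this yields $\Delta\big(R(D)\big) = [1, \mathsf{c}(R(D)) - 2]$.
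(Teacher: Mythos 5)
Your lower bound and the final deduction $\Delta(R(D)) = \{1,2\} = [1, \mathsf c(R(D))-2]$ are fine and match the paper: the paper also obtains $\mathsf c(R(D)) \ge 4$ from an element with set of lengths $\{2,4\}$. But the two substantive halves of the statement are not actually proved in your proposal. For the upper bound $\mathsf c(R(D)) \le 4$ you set up an induction on $n$ for prime-power norm and then write that the remaining case is ``the main obstacle, requiring careful bookkeeping'' --- that is precisely the case that carries the content of the claim, and leaving it open means the bound is not established. (Your inductive step also needs justification: ``after a distance-$\le 2$ adjustment each may be assumed to contain $\bigl(\begin{smallmatrix} p & 0 \\ 0 & p\end{smallmatrix}\bigr)$'' changes the element's off-diagonal bookkeeping and must be checked against the constraint $b = \sum_l p^{n-l}(\sum_j a_{l,j})$ of Lemma \ref{lemma:fact}(\ref{fact:nzd}).) The paper avoids this entirely: it observes from Theorem \ref{lengths}(2) that every nonunit non-atom of prime-power norm has $\min\sL(\alpha) \le 3$, and then invokes the bifurcus-semigroup argument of \cite[Theorem 1.1]{p-etal09} to get $\mathsf c(\alpha) \le 4$, with a separate (easy) bound of $2$ for zero-divisors.

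The local tameness argument has the same problem in sharper form. Your claim that ``the number of atoms touched depends only on $l$, not on $\alpha$'' is exactly the finiteness of the $\omega$-invariant $\omega(R(D)^\bullet,\gamma)$, which is the nontrivial point; asserting it is not proving it (divisibility by an atom of norm $p^l$ imposes a congruence on the off-diagonal entry, not just a norm condition, so it is not obvious how many factors of a given factorization one must collect before their product is divisible by $\gamma$). The paper handles this by showing $R(D)^\bullet$ is $v$-noetherian (divisorial ideals of the monoid are restrictions of ring ideals) and citing \cite[Theorem 4.2]{gh08-2} for $\omega < \infty$, then bounding $\tau(R(D)^\bullet,\gamma) \le 3$ via Theorem \ref{lengths}, and concluding $\mathsf t(R(D),\gamma) < \infty$ from \cite[Proposition 3.8]{gh08-2}. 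To complete your proof you would need either to carry out this $\omega$/$\tau$ reduction or to supply an explicit uniform bound on the number of factors that must be recombined, neither of which is present.
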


\begin{proof}
      We first observe that the catenary degree (see \cite[Chapter 1.6]{ghk06} for the definition in the non-zero-divisor case) of $R(D)$ is $4$: Let first $\alpha \in R(D)$ with $\nr(\alpha) \ne 0$. Using Corollary \ref{coprime factorization}, we can reduce to the case where $\nr(\alpha)$ is a prime power. Since then $\min \sL(\alpha) \le 3$, we can argue as in bifurcus semigroups (cf. \cite[Theorem 1.1]{p-etal09}), to find $\sc(\alpha) \le 4$.
      In view of Lemma \ref{lemma:fact}(\ref{fact:zd}), and with a similar argument, the catenary degree of a zero-divisor is at most $2$. Together this gives $\mathsf c(R(D)) \le 4$. Since there exists an element with set of lengths $\{\, 2,\, 4\,\}$, also $\mathsf c(R(D)) \ge 4$.

      We still have to show that $R(D)$ is locally tame (see \cite[Chapter 1.6]{ghk06} or \cite{gh08-2} for definitions).
      For this we have to show $\mathsf t(R(D), \gamma) < \infty$ for all irreducible $\gamma \in R(D)$.
      Let $\alpha \in R(D)$ and $\gamma \in R(D)$ be irreducible.
      If $\gamma$ is prime, then $\mathsf t(R(D), \gamma) = 0$, hence we may suppose that $\gamma$ is associated to one of the non-prime irreducibles from Theorem \ref{irreducible}, and hence there exist a prime element $p \in D$ and $n \in \bN$ such that $\nr(\gamma) = p^n$.
      If $\alpha \in R(D)$ is a zero-divisor, then $\mathsf t(\alpha, \gamma) = n$ follows easily from Lemma \ref{lemma:fact}(\ref{fact:zd}).

      A standard technique allows us to show $\mathsf t(R(D)^\bullet, \gamma) < \infty$: By \cite[Proposition 3.8]{gh08-2}, it suffices to show that two auxiliary invariants, $\omega(R(D)^\bullet, \gamma)$ and $\tau(R(D)^\bullet, \gamma)$ are finite.
      
      Suppose $I \subset (R(D)^\bullet, \cdot)$ is a divisorial ideal. If we denote by ${}_{R(D)}\langle I \rangle$ the ideal of $R(D)$
      generated by $I$, one checks that ${}_{R(D)}\langle I \rangle \cap R(D)^\bullet = I$. 
        Since $R(D)$ is noetherian, $R(D)^\bullet$ is therefore $v$-noetherian. By \cite[Theorem 4.2]{gh08-2}, $\omega(R(D)^\bullet, \gamma)$ is finite.

      Recalling the definition of $\tau(\alpha,\gamma)$ (from \cite[Definition 3.1]{gh08-2}), it is immediate from Theorem \ref{lengths} together with Corollary \ref{coprime factorization}, that $\tau(R(D)^\bullet, \gamma) \le 3$. Altogether, therefore $\mathsf t(R(D),\gamma) < \infty$.
\end{proof}

\begin{remark} Suppose $D$ is a PID but not a field.
  \begin{enumerate}
    \item Trivially, Theorem \ref{lengths}(2) holds true for $R(D)^\bullet$.

    \item Let $K$ be the quotient field of $D$, and $H = R(D)^\bullet$. We have
      \[
        H = \Big\{ \begin{pmatrix} a & b \\ 0 & a \end{pmatrix} \mid b \in D, a \in D^\bullet \Big\},
      \]
      and the complete integral closure of $H$ is equal to
      \[
        \widehat H = \Big\{ \begin{pmatrix} a & b \\ 0 & a \end{pmatrix} \mid b \in K, a \in D^\bullet \Big\}
      \]
     because $\begin{pmatrix} a & b \\ 0 & a \end{pmatrix}^n = \begin{pmatrix} a^n & n a^{n-1} b \\ 0 & a^n \end{pmatrix}$
      for all $a,b \in K$ and $n \in \bN$.
      This shows $H \ne \widehat{H}$, and even more we have $\mathfrak f = (H:\widehat{H}) = \emptyset$ (note that $(D:K) = \emptyset$).
      Thus the monoid under discussion is neither a Krull nor a C-monoid, which have been extensively studied in recent literature (see \cite[Chapters 2.9, 3.3, and 4.6]{ghk06}, \cite{gh08}, \cite{r12}).
  \end{enumerate}
\end{remark}

\vspace{.2cm}
\noindent {\bf Acknowledgements.}
The first author's work was supported by Basic Science Research Program through the National Research Foundation of
Korea (NRF) funded by the Ministry of Education, Science and Technology (2010-0007069).
The second author was supported by the Austrian Science Fund (FWF): W1230.
We thank the anonymous referee for his/her careful reading and all the helpful
comments, in particular on the concept of self-idealization.


\begin{thebibliography}{10}

\bibitem{p-etal09} D.~Adams, R.~Ardila, D.~Hannasch, A.~Kosh, H.~McCarthy, V.~Ponomarenko, and R.~Rosenbaum,
Bifurcus semigroups and rings, Involve 2 (2009) 351--356.

\bibitem{av96} D.D. Anderson and S. Valdes-Leon, Factorization in commutative rings with zero divisors,
Rocky Mountain J. Math. 26 (1996) 439--480.

\bibitem{a97}
D.D. Anderson (ed.), Factorization in Integral Domains, Lect. Notes
  Pure Appl. Math., vol. 189, Marcel Dekker, 1997.

\bibitem{aw09} D.D. Anderson and M. Winders, Idealization of a Module,
J. Commutative Algebra 1 (2009) 3--56.

\bibitem{bw13}
N.R. Baeth and R.A. Wiegand, Factorization theory and decomposition of modules, Am. Math. Mon. 120 (2013), 3--34.

\bibitem{c05}
S.T. Chapman (ed.), Arithmetical Properties of Commutative Rings
  and Monoids, Lect. Notes Pure Appl. Math., vol. 241, Chapman \& Hall/CRC,
  2005.

\bibitem{ff11}
C.~Frei and S.~Frisch, Non-unique factorization of polynomials over
  residue class rings of the integers, Comm. Algebra 39 (2011) 1482--1490.

\bibitem{ghk06} A.~Geroldinger and F.~Halter-Koch,
Non-unique factorizations, vol. 278 of Pure and Applied Mathematics (Boca Raton),
Chapman \& Hall/CRC, Boca Raton, FL, 2006.
Algebraic, combinatorial and analytic theory.

\bibitem{gh08} A.~Geroldinger and W.~Hassler,
Arithmetic of Mori domains and monoids,
J. Algebra 329 (2008) 3419--3463.

\bibitem{gh08-2} A.~Geroldinger and W.~Hassler,
Local tameness of v-noetherian monoids,
J. Pure Appl. Algebra 212 (2008) 1509--1524.

\bibitem{h88} J. Huckaba, Commutative rings with zero divisors, Dekker, New York, 1988.

\bibitem{r12} A.~Reinhart,
On integral domains that are C-monoids, Houston J. Math., to appear.

\bibitem{s09} L.~Salce,
Transfinite self-idealization and commutative rings of triangular matrices, in {\it Commutative algebra and its applications}, 333--347, Walter de Gruyter, Berlin, 2009.

\end{thebibliography}
\end{document}